\newtheorem{theorem}{Theorem}
\newtheorem{lemma}[theorem]{Lemma}
\newtheorem{proposition}[theorem]{Proposition}
\theoremstyle{remark}
\newtheorem{remark}{Remark}
\newcommand{\sgm}{S_{g-1}(V=\square)}
\newcommand{\soodd}{S_{1,\text{o}}}
\newcommand{\stodd}{S_{2,\text{o}}}
\newcommand{\sumstar}{\sideset{}{^*}\sum}
\newcommand{\fpx}{\mathbb{F}_q((\frac{1}{x}))}
\numberwithin{theorem}{section} \numberwithin{equation}{section}
\begin{document}

\title{Improving the error term in the mean value of $L(\tfrac{1}{2},\chi )$ in the hyperelliptic ensemble}
\date{}
\author{Alexandra Florea}

\newcommand{\Addresses}{{
  \bigskip
  \footnotesize

 \textsc{Department of Mathematics, Stanford University, Stanford, CA 94305}\par\nopagebreak
  \textit{E-mail address}:  \texttt{amusat@stanford.edu}

}}

\maketitle

\begin{abstract}
Andrade and Keating computed the mean value of quadratic Dirichlet $L$--functions at the critical point, in the hyperelliptic ensemble over a fixed finite field $\mathbb{F}_q$. Summing $L(1/2,\chi_D)$ over monic, square-free polynomials $D$ of degree $2g+1$, the main term is of size $|D| \log_q |D|$ (where $|D|=q^{2g+1}$) and Andrade and Keating bound the error term by $|D|^{\frac 34+ \frac{\log_q(2)}{2}}$. For simplicity, we assume that $q$ is prime with $q \equiv 1 \pmod 4$. We prove that there is an extra term of size $|D|^{1/3} \log_q|D|$ in the asymptotic formula and bound the error term by $|D|^{1/4+\epsilon}$.

\end{abstract}
\section{Introduction}
In this paper, we study the first moment of quadratic Dirichlet $L$--functions at the critical point in the function field setting. Specifically, we are interested in 
\begin{equation}
 \sum_{D \in \mathcal{H}_{2g+1}} L \Big( \tfrac{1}{2}, \chi_D \Big), \label{study}
 \end{equation} when $g \to \infty$, where $\mathcal{H}_{2g+1}$ denotes the space of monic, square-free polynomials of degree $2g+1$ over $\mathbb{F}_q[x]$.  Andrade and Keating \cite{keatingandrade} found an asymptotic formula for the first moment \eqref{study}, when the cardinality of the ground field $q$ is fixed and 
 $q \equiv 1 \pmod 4$. They explicitly computed the main term, which is of the order $g q^{2g+1}$, and 
 obtained an error term of size $q^{g(3/2+\log_q 2)}$. In this paper, we consider the first moment when $q\equiv 1\pmod 4$ is prime for simplicity, and find that there is an extra term of size $gq^{2g/3}$ in the asymptotic formula. Then we bound the error term in \eqref{study} by $q^{g/2(1+\epsilon)}$ 
 for any $\epsilon >0$. 
\begin{theorem}
Let $q$ be a prime with $q \equiv 1 \pmod 4$. Then
$$ \sum_{D \in \mathcal{H}_{2g+1}} L \Big( \tfrac{1}{2}, \chi_D \Big) = \frac{C(1)}{2 \zeta(2)} q^{2g+1} \left[ (2g+1)+ 1+ \frac{4}{\log q} \frac{C'}{C} (1) \right] + q^{\frac{2g+1}{3}} R(2g+1)+ O(q^{g/2(1+\epsilon)}),$$ where $\mathcal{H}_{2g+1}$ denotes the space of monic square-free polynomials of degree $2g+1$ over $\mathbb{F}_q[x]$, 
 $$C(s)= \prod_P \left(1 - \frac{1}{(|P|+1)|P|^s} \right),$$  $\zeta$ is the zeta-function associated to $\mathbb{F}_q[x]$ and $R$ is a polynomial of degree $1$ that can be explicitly computed (see formula \eqref{extra}.) \label{impr}
\end{theorem}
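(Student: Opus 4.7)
The plan is to follow the overall Andrade--Keating strategy but refine the analysis at every step in order to expose the secondary main term of size $|D|^{1/3}\log_q|D|$ and to replace their trivial bound on the off--diagonal character sums by Weil's bound. I would start from the approximate functional equation for $L(\tfrac12,\chi_D)$ in the function field setting: since $L(s,\chi_D)$ is a polynomial of degree $2g$ in $q^{-s}$, the functional equation reduces the moment to
\[
\sum_{D \in \mathcal{H}_{2g+1}} L(\tfrac12,\chi_D)
= \sum_{\substack{f \text{ monic}\\ \deg f \le g}}\frac{1}{|f|^{1/2}}\sum_{D\in\mathcal{H}_{2g+1}}\chi_D(f) \;+\; (\text{shorter companion sum}).
\]
Interchanging summation, I split the inner sum according to whether $f$ is a square or not. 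The diagonal contribution from $f = l^2$ is handled via M\"obius inversion to detect coprimality of $D$ and $l$, and yields the main term of order $|D|\log_q|D|$ already isolated by Andrade--Keating.

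For $f$ non-square I would convert $\chi_D(f)$ into $\chi_f(D)$ by quadratic reciprocity (using $q\equiv 1\pmod 4$), detect square-freeness by $\mathbf{1}_{D \text{ sqfree}} = \sum_{\alpha^2\mid D}\mu(\alpha)$, and apply the Poisson summation formula for quadratic characters over $\mathbb{F}_q[x]$ to the inner sum in $D$. This produces a dual expression involving Gauss--type sums $G(V,f)$ indexed by monic $V$ whose degree is bounded in terms of $\deg f$ and $g$. Reversing the order so that $V$ becomes the outer variable, the generating series in $f$ becomes a double Dirichlet series amenable to contour analysis.

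The resulting dual sum splits naturally into the pieces $V=\square$ and $V\neq\square$. Within $V=\square$, I would shift contours in the relevant Dirichlet series carefully and pick up, in addition to the pole used by Andrade--Keating, a further residue that contributes precisely a term of size $|D|^{1/3}\log_q|D|$, giving the explicit polynomial $R(2g+1)$ of formula~\eqref{extra}. For $V\neq\square$, I would invoke Weil's Riemann Hypothesis for curves over $\mathbb{F}_q$ to bound the character sums $G(V,f)$ by $O(|f|^{1/2+\epsilon})$; summing over the short range of admissible $V$ then yields the error bound $O(q^{g(1+\epsilon)/2})$.

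The main obstacle I foresee is the contour/residue calculation inside the $V=\square$ piece: the secondary pole sits on a shifted line where one must simultaneously control convergence of the $C(s)$--like Euler product, keep track of the Euler factors lost in passing through Poisson summation, and identify the coefficients of $R$ exactly. The hypothesis that $q$ is prime and $q\equiv 1\pmod 4$ is used throughout to ensure that the Gauss sums factor cleanly into local pieces of modulus $q^{1/2}$, which is essential both for pinning down the constant in $R$ and for achieving square-root cancellation in the off-diagonal via Weil's bound.
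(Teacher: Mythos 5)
Your outline correctly reproduces the coarse skeleton (functional equation, reciprocity, square-free sieve, Poisson summation, $V=\square$ vs $V\neq\square$), but it misses the two ideas that actually drive the improvement over Andrade--Keating, and as stated it would not deliver the error term $O(q^{g/2(1+\epsilon)})$.

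\textbf{Missing idea 1: the main-term matching.} You propose to compute the diagonal $f=\square$ contribution by M\"obius inversion, isolate its main term, and separately shift contours in the $V=\square$ piece. Computing the diagonal on its own, however, produces an error no smaller than $q^{g(1+\epsilon)}$ — this is exactly why Andrade--Keating's bound is what it is. The actual mechanism in the paper is different: the Poisson formula is applied to \emph{every} $f$ (with $V=0$ automatically selecting $f=\square$), and the resulting main term $M_g$ is deliberately \emph{not} evaluated by residues. It is left as a contour integral $\frac{q^{2g+1}}{\zeta(2)}\frac{1}{2\pi i}\oint_{|u|=r}\mathcal{C}(u)\frac{(qu)^{-[g/2]}}{(1-qu)^2}\frac{du}{u}$ with $r<1/q$. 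The $V=\square$ piece $S_{g-1}(V=\square)$, after the double-contour analysis via the generating function $\mathcal{B}(z,w)$, turns out to produce \emph{minus the same integral} over a contour $|u|=R$ with $1<R<q$, together with the $q^{(2g+1)/3}$ residue. Their sum is then an \emph{exact} residue at $u=1/q$, with no accompanying $q^{g(1+\epsilon)}$ error. Without recognizing that $M_g$ and $S_{g-1}(V=\square)$ cancel \emph{exactly} in this way (and likewise $M_{g-1}$ against $S_g(V=\square)$), you cannot escape the $q^{g(1+\epsilon)}$ barrier: adding independently computed pieces with independent errors leaves you with the larger of the two.

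\textbf{Missing idea 2: the $V\neq\square$ bound.} Applying the Weil bound to the individual Gauss sums $G(V,\chi_f)\ll|f|^{1/2+\epsilon}$ and then summing trivially over $f$ and over the admissible $V$ gives roughly $q^{2g+3/2}\sum_{n\le g}q^{-3n/2}\cdot q^{n}\cdot q^{n/2}\cdot q^{-2g-2}\cdot q^{\epsilon g}\approx q^{g(1+\epsilon)}$, not $q^{g/2(1+\epsilon)}$: the factor $q^{n}$ from counting $f\in\mathcal{M}_n$ kills you. What is actually needed is square-root cancellation \emph{in the sum over $f$}. The paper achieves this by forming the generating function $\mathcal{B}(V;w,u)=\sum_f w^{d(f)}G(V,\chi_f)/\bigl(\sqrt{|f|}\prod_{P|f}(1-u^{d(P)})\bigr)$ and showing it factors through $\mathcal{L}(w,\chi_V)\mathcal{L}(wu,\chi_V)\cdots$, so that the sum over $f\in\mathcal{M}_n$ is $\ll q^{n/2(1+\epsilon)}|\mathcal{L}(w,\chi_V)\cdots||V|^{\epsilon}$. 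The RH for function fields then enters through a Lindel\"of-type bound on $\mathcal{L}$ near the critical circle (Theorem 3.3 of \cite{altug}), applied to the \emph{dual} $L$-function $\mathcal{L}(\cdot,\chi_V)$ — not through a Weil bound on the individual $G(V,\chi_f)$. So you invoke the right theorem but at the wrong place; the necessary saving lives in the $f$-aggregate, not in each summand.

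A smaller structural remark: restricting Poisson summation to non-square $f$ is awkward, because the $V=\square$ term also receives contributions from square $f$ (e.g.\ $G(l^2,\chi_{P^2})=-|P|$), and it is precisely the full $V=\square$ sum over all $f$ of even degree that produces the integrand matching $M_g$. Applying Poisson uniformly to all $f$, letting $V=0$ recover the $f=\square$ main term, is both cleaner and essential for the cancellation described above.
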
  
 Finding asymptotics for moments of families of $L$--functions over number fields is a well-studied problem. Considering the family of quadratic Dirichlet $L$--functions, Jutila \cite{jutila} computed the first moment in $1981$. He proved that
  $$\sum_{0<d \leq D} L \Big(\frac{1}{2}, \chi_d \Big) = \frac{P(1)}{4 \zeta(2)} D \left[ \log(D/\pi)+ \frac{\Gamma'}{\Gamma} (1/4) +4 \gamma -1 +4 \frac{P'}{P}(1) \right] + O( D ^{3/4+\epsilon}),$$ where $$P(s)= \prod_p \left( 1- \frac{1}{(p+1)p^s} \right). $$Goldfeld and Hoffstein \cite{goldfeld} improved the error bound to $D^{19/32+\epsilon}$. 
  Young \cite{young} considered the smoothed first moment and showed that the error term is bounded by $D^{1/2+\epsilon}$. 

The remainder term for the first moment of quadratic Dirichlet $L$-functions is conjectured to be of size $D^{1/4+\epsilon}$ in \cite{andersonrubinstein}. Our approach in bounding the remainder over function fields is similar to Young's method in \cite{young}, but in our setting, we are able to go beyond the square-root cancellation.

Jutila \cite{jutila} also computed the variance, and Soundararajan \cite{sound} computed the second and third moments, when averaging over real, primitive, even characters with conductor $8d$.  It is conjectured that 
$$   \sumstar_{0 < d \leq D} L \Big(\tfrac{1}{2}, \chi_d \Big)^k \sim C_k D (\log D)^{k(k+1)/2},$$  where the sum is over fundamental discriminants. Keating and Snaith \cite{keatingsnaith} conjectured a precise value for $C_k$, using analogies with random matrix theory.
There is another conjecture of Conrey, Farmer, Keating, Rubinstein and Snaith \cite{cfkrs} for the integral moments, and the formulas include all the principal lower order terms. The conjecture agrees with the computed first three moments.
 
In the function field setting, the analogous problem is to find asymptotics for 
\begin{equation}
 \frac{1}{ \mathcal{H}_{2g+1,q}} \sum_{D \in \mathcal{H}_{2g+1,q}} L \Big( \tfrac{1}{2}, \chi_D \Big)^k, \label{mom} \end{equation} as $|D| = q^{\deg(D)} \to \infty,$ where $\mathcal{H}_{2g+1,q}$ denotes the space of monic, square-free polynomials of degree $2g+1$ over $\mathbb{F}_q[x]$. Since we let $|D| \to \infty$, we can consider two limits: the limit $q \to \infty$ (and $g$ fixed), or $g \to \infty $ (and $q$ fixed). Katz and Sarnak \cite{katzsarnak}, \cite{katzsarnak2} used equidistribution results to relate the $q$--limit of \eqref{mom} to a random matrix theory integral, which was then computed by Keating and Snaith \cite{keatingsnaith}. 

Here, we are interested in the other limit, when $g \to \infty $ and $q$ is fixed. 
In analogy with the conjectured moments for the family of quadratic Dirichlet $L$--functions over number fields,  Andrade and Keating \cite{conjectures} conjectured asymptotic formulas for integral moments of $L(1/2,\chi_D)$, for $q$ fixed and $g \to \infty$. In the recent paper \cite{rubinstein}, Rubinstein and Wu provide numerical evidence for the conjecture in \cite{conjectures}. They numerically computed the moments for $k \leq 10, d \leq 18 $ (where $d=2g+1$) and various values of $q$ and compared them to the conjectured formulas. Their data suggest that the ratio of the actual moment to the conjectured moment goes to $1$ as $g $ grows. 

Note that we can also compute the shifted first moment $ \sum_{D \in \mathcal{H}_{2g+1}} L \Big( \tfrac{1}{2}+\alpha, \chi_D \Big),$ when $\alpha$ is in a small neighborhood of $0$. Working instead with the completed $L$--function $\Lambda(s, \chi_D) = q^{-g(1-2s)} L(s,\chi_D)$ (which satisfies the symmetric functional equations $\Lambda(s,\chi_D)= \Lambda(1-s,\chi_D)$ ), we can find asymptotics for
$$ \sum_{D \in \mathcal{H}_{2g+1}} \Lambda \Big( \frac{1}{2}+\alpha,\chi_D \Big), $$ and we compute two main terms, one of size $q^{2g+1-\alpha g}$ and another of size $q^{2g+1+\alpha g}$, and two secondary main terms, of size $q^{(2g+\alpha g)/3}$ and $q^{(2g-\alpha g)/3}$. The error will be bounded by $O(q^{g/2(1+\epsilon)})$. 

Using ideas developed in this paper, we are also able to compute the second and third moments of $L(1/2,\chi_D)$ in the hyperelliptic ensemble, and the answer agrees with the conjecture in \cite{conjectures}. Computing higher moments is work in progress. 

For the hyperelliptic ensemble, we note the results of Entin, Roditty-Gershon and Rudnick \cite{entin}, Faifman and Rudnick \cite{faifman} and Kurlberg and Rudnick \cite{kurlberg}. 
\newline

\textit{Acknowledgments.} I would like to thank Kannan Soundararajan for many helpful discussions and for the suggestions he has offered throughout this work. Also, I would like to thank Julio Andrade, Jon Keating and Zeev Rudnick for useful comments on this paper.
\section{Preliminaries and background} 
We first introduce the notation we will use throughout the paper and then we will provide some background information on $L$--functions over function fields, quadratic Dirichlet characters, and their connection to zeta functions of curves. 

Denote by $\mathcal{M}$ the set of monic polynomials in $\mathbb{F}_q[x]$, and by $\mathcal{M}_n$ and $\mathcal{M}_{\le n}$ 
the sets of monic polynomials of degree $n$ and degree at most $n$ respectively. Let $\mathcal{H}_d$ denote the space of monic, square-free polynomials over $ \mathbb{F}_q[x]$ of degree $d$. 
For a polynomial $f \in \mathbb{F}_q[x]$,  we will denote its degree by $d(f)$, and its norm $|f|$ is defined to be $q^{d(f)}$. 
The letter $P$ will always denote a monic, irreducible polynomial over $\mathbb{F}_q[x]$.

\subsection{Basic facts about quadratic Dirichlet characters over functions fields and their L-functions}
Most of the facts stated in this section are proven in \cite{rosen}.  For $\text{Re}(s) >1$, the zeta function of $\mathbb{F}_q[x]$ is defined by 
$$
 \zeta(s) = \sum_{f \text{ monic} } \frac{1}{|f|^s} = \prod_{\substack{P \text{ monic} \\ \text{ irreducible}}} \displaystyle \left( 1-\frac{1}{|P|^s} \right)^{-1}.
 $$ 
 Since there are $q^n$ monic polynomials of degree $n$, we see that
$$
 \zeta(s) = (1-q^{1-s})^{-1}.
 $$ 
 We also find it sometimes convenient to make the change of variables $u=q^{-s}$, and then write 
 $\mathcal{Z}(u)= \zeta(s)$, so that  $\mathcal{Z}(u)= (1-qu)^{-1}$. 

Assume that  $q$ is an odd prime with $q \equiv 1 \pmod 4$. For $P$ a monic irreducible polynomial, the quadratic residue symbol $ ( \frac{f}{P}  ) \in \{\pm 1\}$ is defined by 
$$ \Big( \frac{f}{P} \Big) \equiv f^{(|P|-1)/2} \pmod P,$$ for $(f,P)=1$.  If $P|f$, then $ \left( \frac{f}{P} \right) =0$. 
If $Q=P_1^{e_1} P_2^{e_2} \cdot \ldots \cdot P_r^{e_r},$ then the Jacobi symbol is defined by
$$ 
\Big( \frac{f}{Q} \Big) = \prod_{j=1}^r \Big( \frac{f}{P_j} \Big)^{e_j}.
$$ 
The following formula is the analogue of the quadratic reciprocity law over function fields.
If $A,B \in \mathbb{F}_q[x]$ are relatively prime, non-zero polynomials, then
$$ 
\Big( \frac{A}{B} \Big)  =\Big( \frac{B}{A} \Big) (-1)^{\frac{(q-1)}2 d(A) d(B)}.
$$
Since we are assuming that $q \equiv 1 \pmod 4$, the quadratic reciprocity law above gives $( \frac{A}{B} ) = ( \frac{B}{A})$. 
We will use this fact several times throughout the paper.

For $D$ a square-free, monic polynomial, define the quadratic character
$$ \chi_D(g)= \left( \frac{D}{g} \right).$$ For $f$ monic, non-squarefree, we similarly define the character $\chi_f$, which is given by the Jacobi symbol defined above.
Consider the $L$--function attached to the character $\chi_D$:
$$L(s, \chi_D)= \sum_{f \in \mathcal{M} } \frac{\chi_D(f)}{|f|^s}.$$ This converges for $\text{Re}(s)>1$. With the change of variables $u=q^{-s}$, we have
$$ L(s,\chi_D) = \mathcal{L}(u,\chi_D)= \sum_{f \in \mathcal{M} } \chi_D(f) u^{\text{deg}(f)} = \prod_{P} (1-\chi_D(P) u^{\text{deg}(P)})^{-1}.$$
Note that $\mathcal{L}(u,\chi_D)$ is a polynomial in $u$ of degree at most $\text{deg}(D)-1$. 

 One can show that for $d \geq 2$,
$$ |\mathcal{H}_d|= q^{d-1}(q-1).$$
If $D \in \mathcal{H}_{2g+1}$, then $\mathcal{L}(u,\chi_D)$ is a polynomial in $u$ of degree $2g$, and it satisfies the following functional equation:
\begin{equation}
\mathcal{L}(u,\chi_D)= (qu^2)^{g} \mathcal{L}(1/qu,\chi_D). \label{fe1}
\end{equation}
There is a connection between $L$--functions and zeta functions of curves. For $D \in \mathcal{H}_{2g+1}$, the affine equation $y^2=D(x)$ defines a projective and connected hyperelliptic curve $C_D$ of genus $g$ over $\mathbb{F}_q$. The zeta function associated to $C_D$ is defined by
$$ Z_{C_D} (u) = \exp \left( \sum_{r=1}^{\infty} N_r(C_D) \frac{u^r}{r} \right), $$ where $N_r(C_D)$ is the number of points on the curve $C_D$ over $\mathbb{F}_{q^r}$, including the point at infinity. Weil \cite{weil} showed that
$$ Z_{C_D} (u) = \frac{P_{C_D}(u)}{(1-u)(1-qu)},$$ where $P_{C_D}(u)$ is a polynomial of degree $2g$. Moreover, one can show that $P_{C_D}(u) = \mathcal{L}(u,\chi_D)$ (this was proven in Artin's thesis.) Weil \cite{weil} also proved the Riemann hypothesis for curves over function fields, so all the zeros of $\mathcal{L}(u,\chi_D)$ lie on the circle $|u|=q^{-1/2}$.
\subsection{Functional equation and preliminary lemma}

 Using \eqref{fe1}, one can show the following exact formula for $L(1/2,\chi_D)$. This is the analogue of the approximate functional equation in the number field setting.
\begin{lemma}
Let $D \in \mathcal{H}_{2g+1}$. Then
$$L \Big( \tfrac{1}{2}, \chi_D \Big) =  \sum_{f \in \mathcal{M}_{\leq g}} \frac{\chi_D(f)}{\sqrt{|f|}}+  \sum_{f \in \mathcal{M}_{\leq g-1}}  \frac{\chi_D(f)}{\sqrt{|f|}}.$$
\label{fe}
\end{lemma}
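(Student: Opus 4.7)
The plan is to expand $\mathcal{L}(u,\chi_D)$ as a polynomial in $u$, substitute $u = q^{-1/2}$, and then exploit the functional equation \eqref{fe1} to rewrite the ``upper half'' of the resulting sum in terms of the ``lower half.''

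First I would write
\[
\mathcal{L}(u,\chi_D) = \sum_{n=0}^{2g} A_n(D)\, u^n, \qquad A_n(D) := \sum_{f \in \mathcal{M}_n} \chi_D(f),
\]
using the fact (stated just above the lemma) that $\mathcal{L}(u,\chi_D)$ is a polynomial of degree $2g$ when $D \in \mathcal{H}_{2g+1}$. Substituting $u = q^{-1/2}$ gives
\[
L\!\left(\tfrac{1}{2},\chi_D\right) = \sum_{n=0}^{2g} \frac{A_n(D)}{q^{n/2}} = \sum_{n=0}^{g} \frac{A_n(D)}{q^{n/2}} + \sum_{n=g+1}^{2g} \frac{A_n(D)}{q^{n/2}}.
\]

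Next I would extract the symmetry of the coefficients from the functional equation \eqref{fe1}. Writing out $(qu^2)^g \mathcal{L}(1/(qu),\chi_D) = \sum_{n=0}^{2g} A_n(D)\, q^{g-n} u^{2g-n}$ and matching powers of $u$ with $\mathcal{L}(u,\chi_D)$ yields the identity
\[
A_{2g-n}(D) = q^{\,g-n}\, A_n(D) \qquad (0 \le n \le 2g).
\]

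Finally I would apply this identity to the tail sum. Changing variables $m = 2g - n$ in the second sum (so $m$ runs from $0$ to $g-1$) gives
\[
\sum_{n=g+1}^{2g} \frac{A_n(D)}{q^{n/2}} = \sum_{m=0}^{g-1} \frac{A_{2g-m}(D)}{q^{(2g-m)/2}} = \sum_{m=0}^{g-1} \frac{q^{g-m} A_m(D)}{q^{(2g-m)/2}} = \sum_{m=0}^{g-1} \frac{A_m(D)}{q^{m/2}}.
\]
Recombining the two pieces and recognizing $q^{n/2} = \sqrt{|f|}$ for $f$ of degree $n$ yields
\[
L\!\left(\tfrac{1}{2},\chi_D\right) = \sum_{f \in \mathcal{M}_{\le g}} \frac{\chi_D(f)}{\sqrt{|f|}} + \sum_{f \in \mathcal{M}_{\le g-1}} \frac{\chi_D(f)}{\sqrt{|f|}},
\]
as desired. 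There is no real obstacle here; the only thing to watch is the bookkeeping in the change of variables so that the ``dual'' range truncates at $g-1$ (not $g$), which is forced by the asymmetric fact that $\mathcal{L}$ has degree $2g$ starting from $n=0$.
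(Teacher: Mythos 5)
Your proof is correct and is the standard argument: expand $\mathcal{L}(u,\chi_D)$ as a polynomial of degree $2g$, use the functional equation \eqref{fe1} to obtain the coefficient symmetry $A_{2g-n}(D) = q^{g-n}A_n(D)$, and reflect the tail $n > g$ back onto $0 \le n \le g-1$. The paper itself simply cites Lemma 1 of Andrade--Keating rather than reproving it, but your derivation is exactly the one used there; the bookkeeping you flag (the dual range truncating at $g-1$ because $\mathcal{L}$ has degree $2g$ and $2g+1$ terms split as $(g+1)+g$) is handled correctly.
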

\begin{proof}
See Lemma $1$ in \cite{keatingandrade}.
\end{proof}
Using Lemma \ref{fe}, it follows that
\begin{equation}
\sum_{D \in \mathcal{H}_{2g+1}} L \Big( \tfrac{1}{2}, \chi_D \Big) =  \sum_{f \in \mathcal{M}_{\leq g}} \sum_{D \in \mathcal{H}_{2g+1}} \frac{\chi_D(f)}{\sqrt{|f|}}+  \sum_{f \in \mathcal{M}_{\leq g-1}}  \sum_{D \in \mathcal{H}_{2g+1}} \frac{\chi_D(f)}{\sqrt{|f|}}. \label{ecfunct}
\end{equation}
Now we prove the following lemma.
\begin{lemma}
For $f$ a monic polynomial in $\mathbb{F}_q[x]$, we have that
$$ \sum_{D \in \mathcal{H}_{2g+1}} \chi_D(f)= \sum_{C | f^{\infty}} \sum_{h \in \mathcal{M}_{2g+1-2 d(C)}} \chi_f(h) - q \sum_{C | f^{\infty}} \sum_{h \in \mathcal{M}_{2g-1-2 d(C)}} \chi_f(h),$$
where the first summation is over monic polynomials $C$ whose prime factors are among the prime factors of $f$. \label{firstpoint} \end{lemma}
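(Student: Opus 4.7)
The plan is to proceed in three steps: (i) use quadratic reciprocity to move $D$ into the denominator of the Jacobi symbol, (ii) detect squarefreeness by M\"obius inversion, and (iii) convert the resulting coprimality condition on the M\"obius variable into the $C\mid f^{\infty}$ structure via a short Euler-product identity.

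Since $q\equiv 1\pmod 4$, the reciprocity law stated in the paper gives $\chi_D(f)=\chi_f(D)$ for every pair of monic polynomials $(D,f)$ (both sides vanish when $\gcd(D,f)\neq 1$, by the convention on the Jacobi symbol). Expanding $\mu^2(D)=\sum_{A^2\mid D}\mu(A)$ and writing $D=A^2E$ with $A,E\in\mathcal{M}$, complete multiplicativity of $\chi_f$ together with $\chi_f(A)^2=\mathbf{1}_{(A,f)=1}$ yields
$$\sum_{D\in\mathcal{H}_{2g+1}}\chi_D(f)=\sum_{\substack{A\in\mathcal{M}\\ (A,f)=1}}\mu(A)\sum_{E\in\mathcal{M}_{2g+1-2d(A)}}\chi_f(E).$$

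To turn the condition $(A,f)=1$ into the two sums appearing on the right-hand side of the lemma, I switch to generating functions. Set $T_n:=\sum_{E\in\mathcal{M}_n}\chi_f(E)$; then the display above packages into
$$\sum_n\Bigl(\sum_{D\in\mathcal{H}_n}\chi_D(f)\Bigr)u^n=\mathcal{L}(u,\chi_f)\prod_{P\nmid f}\bigl(1-u^{2d(P)}\bigr).$$
The key identity is
$$\prod_{P\nmid f}\bigl(1-u^{2d(P)}\bigr)=\frac{\prod_P(1-u^{2d(P)})}{\prod_{P\mid f}(1-u^{2d(P)})}=(1-qu^2)\sum_{C\mid f^{\infty}}u^{2d(C)},$$
using $\prod_P(1-u^{2d(P)})=\mathcal{Z}(u^2)^{-1}=1-qu^2$ and expanding each factor in the denominator as a geometric series, so that $C$ runs over monic polynomials whose prime factors all divide $f$. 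Extracting the coefficient of $u^{2g+1}$ on both sides now produces exactly $\sum_{C\mid f^{\infty}}\bigl(T_{2g+1-2d(C)}-q\,T_{2g-1-2d(C)}\bigr)$, which is the claimed identity.

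I do not anticipate any serious obstacle, since each step is a short manipulation. The only conceptual observation is that completing $\prod_{P\nmid f}$ to the full Euler product $\prod_P$ introduces precisely the factor $1-qu^2=\mathcal{Z}(u^2)^{-1}$, and this is exactly what produces the two sums (with and without the factor of $q$) on the right-hand side of the statement.
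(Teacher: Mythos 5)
Your argument is correct and follows essentially the same route as the paper: apply quadratic reciprocity to write $\chi_D(f)=\chi_f(D)$, pass to the generating function $A_f(u)=\sum_D u^{d(D)}\chi_f(D)$, and identify it as $\mathcal{L}(u,\chi_f)(1-qu^2)\sum_{C\mid f^{\infty}}u^{2d(C)}$ before extracting the coefficient of $u^{2g+1}$. The only cosmetic difference is that you reach the Euler factor $\prod_{P\nmid f}(1-u^{2d(P)})$ via M\"obius inversion of the squarefree condition, whereas the paper writes $A_f(u)=\prod_{P\nmid f}(1+u^{d(P)}\chi_f(P))=\mathcal{L}(u,\chi_f)/\mathcal{L}(u^2,\chi_f^2)$ directly; these are the same identity expressed two ways.
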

\begin{proof}
Using the quadratic reciprocity law, since $q \equiv 1 \pmod 4$, we have that $\chi_D(f)= \chi_f(D)$.
Let $$A_f(u)= \sum_{\substack{D \text{ square-free}  \\ \text{monic}}} u^{d(D)} \chi_f(D) .$$ Using Euler products, we get that
$$ A_f(u)= \prod_{P \nmid f} (1+u^{d(P)} \chi_f(P) ) = \frac{ \mathcal{L}(u,\chi_f)} {\mathcal{L}(u^2, \chi_f^2) } = \frac{ \mathcal{L}(u,\chi_f)}{ \mathcal{Z}(u^2)\displaystyle  \prod_{P|f} (1- u^{2 d(P)} ) }$$  Since
$$ \prod_{P|f} (1-u^{2d(P)})^{-1}= \sum_{C | f^{\infty}} u^{2 d(C)},$$
and $\mathcal{Z}(u^2)=(1-qu^2)^{-1}$ and $\mathcal{L}(u,\chi_f)=\displaystyle \sum_{i=0}^{\infty} u^i \sum_{h \in \mathcal{M}_i} \chi_f(h)$, the conclusion follows.
\end{proof}
 \subsection{Outline of the proof}
 The main term will come from the contribution of square polynomials $f$ to \eqref{ecfunct}, just like in the number field case. We will express the sum over square polynomials $f$ as a contour integral. The integrand will have a pole at $u=1/q$, and by shifting contours, we could express the main term in terms of the residue at $u=1/q$, plus an error of size $q^{g(1+\epsilon)}$. By simply enlarging the contour of integration, $q^{g(1+\epsilon)}$ seems to be the best error we can hope for, so instead we leave the main term in its integral form and look at the contribution from non-square polynomials $f$ as well. In evaluating this term, we will use a form of Poisson summation over $\mathbb{F}_q[x]$. We will analyze the sum over square polynomials $V$ (where $V$ is the dual variable in the Poisson sum), and we'll show how this term combines with the main term, which will allow us to calculate their sum exactly, with no error term. When evaluating the sum over square polynomials $V$, we find an extra term of size $gq^{2g/3}$. Evaluating the sum over non-square polynomials $V$ will give an error of size $q^{g/2(1+\epsilon)}$.
  
In section \ref{poissonformula}, we will prove the Poisson summation formula over function fields, which relates different character sums. In section \ref{main}, we compute the main term. After using the Poisson summation formula, we consider the sum over square polynomials $V$, where $V$ is the dual variable in the Poisson formula. We will evaluate this sum in section \ref{sec:secondary}. In section \ref{sec:nonsquare}, we bound the contribution from non-square polynomials $V$. We'll show how the main term combines with the contribution from square polynomials $V$ and we conclude the proof of Theorem \ref{impr} in section \ref{last}.

We note that our approach is similar to Young's method of getting square-root cancellation for the smoothed first moment in \cite{young}. When evaluating individual terms, we can't get a better error term than $q^{g(1+\epsilon)}$; however, by matching terms, we can prove that the error term is bounded by $q^{g/2(1+\epsilon)}$. Note that in this setting, we can go beyond the square-root cancellation from the number field setting. This better result relies on repeatedly using the Riemann hypothesis over function fields and the fact that $\zeta$ has no zeros, hence $1/\zeta$, which appears frequently in our calculations, has no poles. This makes dealing with the square-free condition easier than over number fields and allows us to shift contours more than in the number field case, thus getting improved error terms.
 
 \section{Poisson summation formula}
 \label{poissonformula}
 In this section, we will prove the Poisson summation formula. For simplicity, we assume that the cardinality $q$ of $\mathbb{F}_q$ is a prime and $q \equiv 1 \pmod 4$. 
We begin by recalling the exponential function introduced by D. Hayes, \cite{hayes}.  Each $a \in \fpx$ can be written uniquely as
$$
a = \sum_{i=- \infty}^{\infty} a_i \left(\frac{1}{x} \right)^i ,
$$ 
with $a_i \in \mathbb{F}_q$, and such that all but finitely many of the $a_i$ with $i<0$ are nonzero. One can define the following valuation
$$ \nu(a) = \text{ smallest } i \text{ such that } a_i \neq 0.
$$ 
For $a \in \fpx$ define the exponential (see \cite{hayes})  
$$
e(a)= \displaystyle e^{2\pi i a_1/q},
$$ 
where $a_{1}$ is the coefficient of $1/x$ in the expansion of $a$. From \cite{hayes}, recall that for $a,b \in \fpx$, we have $e(a+b) = e(a) e(b)$. Also, for $A \in \mathbb{F}_q[x]$, $e(A)=1$. If $A,B,H \in \mathbb{F}_q[x]$ are such that $A \equiv B \pmod H$, then $e(A/H)=e(B/H)$.

Now define the generalized Gauss sum 
$$
G(u,\chi) = \displaystyle \sum_{V \pmod f} \chi(V) e \left(\frac{uV}{f}\right).
$$
The main result of this section is the following Poisson summation formula for Dirichlet characters.

\begin{proposition}
Let $f$ be a monic polynomial of degree $n$ in $\mathbb{F}_q[x]$ and let $m$ be a positive integer. If the degree $n$ of $f$ is even, then
\begin{equation}
\sum_{g \in \mathcal{M}_m} \chi_f(g) = \frac{q^m}{|f|} \left[ G(0,\chi_f) + (q-1) \sum_{V \in \mathcal{M}_{\leq n-m-2} } G(V,\chi_f) - \sum_{V \in \mathcal{M}_{n-m-1}} G(V,\chi_f) \right]. \label{even}
\end{equation}
If $n$ is odd, then
\begin{equation}
 \sum_{g \in \mathcal{M}_m} \chi_f(g) = \frac{q^m}{|f|} \sqrt{q} \sum_{V \in \mathcal{M}_{n-m-1}} G(V,\chi_f).  \label{odd} \end{equation}
\label{poissonmonic}
\end{proposition}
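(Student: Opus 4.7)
The plan is to derive the identity by finite Fourier inversion on $\mathbb{F}_q[x]/(f)$, combined with a direct evaluation of the resulting additive character sum over monic polynomials of fixed degree. The starting observation is that $G(V,\chi_f)$ is precisely the finite Fourier transform of $\chi_f$ viewed as a function on $\mathbb{F}_q[x]/(f)$, so orthogonality of the additive characters $a\mapsto e(aV/f)$ gives the inversion formula
$$\chi_f(g) = \frac{1}{|f|}\sum_{V \bmod f} G(V,\chi_f)\,e(-gV/f).$$
Plugging this into the left hand side of the claim and interchanging summation reduces the problem to evaluating the inner sum $S(V):=\sum_{g\in\mathcal{M}_m} e(-gV/f)$.

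The key computation is $S(V)$. Writing $g=x^m+g_{m-1}x^{m-1}+\cdots+g_0$ and expanding $V/f$ as a Laurent series in $1/x$, the coefficient of $1/x$ in $gV/f$ is a linear form in the free coefficients $g_0,\ldots,g_{m-1}$ whose coefficients are the Laurent coefficients of $V/f$. Summing the $\mathbb{F}_q$-valued additive character over each $g_i$ gives $0$ unless the corresponding Laurent coefficient of $V/f$ vanishes. Tracking which coefficients must vanish, in terms of $\nu(V/f)=n-\deg V$, yields three regimes: $S(V)=q^m$ when $\deg V\le n-m-2$; $S(V)=q^m\psi(-c_V)$ when $\deg V=n-m-1$, with $\psi$ the standard additive character of $\mathbb{F}_q$ and $c_V$ the leading coefficient of $V$; and $S(V)=0$ when $\deg V\ge n-m$. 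The term $V=0$ contributes $q^m G(0,\chi_f)$ separately.

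What remains is to repackage the sum over all residues $V\bmod f$ as a sum over \emph{monic} $V$, as in the statement. The change of variable $a\mapsto a/c$ in the definition of $G$ gives $G(cV',\chi_f)=\chi_f(c)\,G(V',\chi_f)$ for $c\in\mathbb{F}_q^*$, and a short calculation with the definition of the Jacobi symbol shows $\chi_f(c)=\chi(c)^n$, where $\chi$ is the quadratic character of $\mathbb{F}_q^*$. This is where the parity of $n$ splits the argument. For $n$ even, $\chi_f$ is trivial on constants; summing over leading coefficients produces a factor $q-1$ on the low degree range and $\sum_{c\in\mathbb{F}_q^*}\psi(-c)=-1$ on the boundary, which assembles the first stated formula. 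For $n$ odd, $\sum_{c\in\mathbb{F}_q^*}\chi(c)=0$ kills every contribution with $\deg V\le n-m-2$, and $G(0,\chi_f)=0$ because $f$ of odd degree is not a perfect square and so $\chi_f$ is a nontrivial character modulo $f$; the boundary contribution then reduces to the classical quadratic Gauss sum over $\mathbb{F}_q$, which equals $\sqrt{q}$ under the hypothesis $q\equiv 1\pmod 4$, producing the second stated formula.

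The main obstacle is essentially bookkeeping: tracking the three regimes for $S(V)$ against the parity of $n$, and correctly accounting for the action of $\chi_f$ on constants when passing from all residues to monic $V$. The arithmetic hypothesis $q\equiv 1\pmod 4$ enters exactly once, to fix the sign of the quadratic Gauss sum over $\mathbb{F}_q$ in the odd case.
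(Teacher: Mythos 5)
Your proof is correct and takes essentially the same approach as the paper: both begin with finite Fourier inversion $\chi_f(g)=\frac{1}{|f|}\sum_{V\bmod f}G(V,\chi_f)e(-gV/f)$, both evaluate the resulting exponential sum over monic $g$ of degree $m$ by isolating the three regimes $\deg V\le n-m-2$, $\deg V=n-m-1$, $\deg V\ge n-m$, and both pass from all residues $V$ to monic $V$ via $G(cV,\chi_f)=\chi_f(c^{-1})G(V,\chi_f)$ together with the identity $\chi_f\vert_{\mathbb{F}_q^*}=\chi^{\deg f}$ (the paper phrases this through equations \eqref{small}--\eqref{big} rather than stating the identity explicitly). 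The only cosmetic difference is that the paper factors $g=x^m+u$ and cites Hayes's Lemma 3.7 for the sum over $u$, whereas you expand $g$ coefficient by coefficient and read off which Laurent coefficients of $V/f$ must vanish; these are the same computation.
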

\begin{remark}
Note that when $f=P$ is an irreducible polynomial, using the fact that $G(V,\chi_P)=\left( \frac{V}{P} \right) |P|^{1/2}$ (which follows by using the next lemma), the formulas above agree with the formulas proven in Proposition $7$ in \cite{rudnickfrob}.
\end{remark}
Before proving the proposition above, we first state the following lemma, which allows us to compute $G(V,\chi_f)$.

\begin{lemma}
Suppose that $q \equiv 1 \pmod 4$. Then 
\begin{enumerate}
\item If $(f,g)=1$, then $G(V, \chi_{fg})= G(V, \chi_f) G(V,\chi_g)$.
\item Write $V= V_1 P^{\alpha}$ where $P \nmid V_1$.
Then 
 $$G(V , \chi_{P^i})= 
\begin{cases}
0 & \mbox{if }  i \leq \alpha \text{ and } i \text{ odd} \\
\phi(P^i) & \mbox{if }  i \leq \alpha \text{ and } i \text{ even} \\
-|P|^{i-1} & \mbox{if }  i= \alpha+1 \text{ and } i \text{ even} \\
\left( \frac{V_1}{P} \right) |P|^{i-1} |P|^{1/2} & \mbox{if } i = \alpha+1 \text{ and } i \text{ odd} \\
0 & \mbox{if } i \geq 2+ \alpha .
\end{cases}$$ 
\end{enumerate} \label{computeg}
\end{lemma}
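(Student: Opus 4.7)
My plan for part (1) is to use the Chinese Remainder Theorem together with the quadratic reciprocity law. Parametrize the residues $W \pmod{fg}$ as $W \equiv g W_1 + f W_2$ with $W_1 \pmod f$ and $W_2 \pmod g$. The exponential splits as $e(VW/fg) = e(VW_1/f)\,e(VW_2/g)$, and the Jacobi symbol splits as $\chi_{fg}(W) = \chi_f(g)\chi_g(f)\chi_f(W_1)\chi_g(W_2)$. The twist $\chi_f(g)\chi_g(f)$ equals $1$ by the reciprocity statement recalled in the preliminaries (which, under $q \equiv 1 \pmod 4$, gives $\chi_f(g) = \chi_g(f)$ and hence a square of a unit symbol), and the resulting double sum factors cleanly into $G(V,\chi_f)\,G(V,\chi_g)$.

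For part (2), write $V = V_1 P^\alpha$ with $P \nmid V_1$ and break into three ranges of $i$. When $i \leq \alpha$, the ratio $VW/P^i = V_1 P^{\alpha-i} W$ already lies in $\mathbb{F}_q[x]$, so the exponential equals $1$. Since $\chi_{P^i}(W) = \bigl(\tfrac{W}{P}\bigr)^i$ depends only on $W \pmod P$, the Gauss sum equals $|P|^{i-1}$ times the full character sum mod $P$, which is $\phi(P^i)$ for even $i$ and zero for odd $i$.

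The case $i = \alpha + 1$ is the heart of the lemma. Here $VW/P^i$ reduces to $V_1 W/P$, so both the character and the exponential depend only on $W \pmod P$, giving
\[
G(V,\chi_{P^i}) = |P|^{i-1} \sum_{W_0 \pmod P} \Bigl(\tfrac{W_0}{P}\Bigr)^i e(V_1 W_0/P).
\]
If $i$ is even, the character becomes the principal character on units, and orthogonality (using $P \nmid V_1$) leaves $-|P|^{i-1}$. If $i$ is odd, the substitution $W_0 \mapsto V_1^{-1} W_0$ pulls out the factor $\bigl(\tfrac{V_1}{P}\bigr)$ and reduces the remaining sum to the standard quadratic Gauss sum modulo $P$, which evaluates to $|P|^{1/2}$.

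Finally, when $i \geq \alpha + 2$, set $j = i - \alpha \geq 2$ and parametrize $W$ so that the exponential $e(V_1 W/P^j)$ and the character (which still depends only on $W \pmod P$) decouple; refining the residue mod $P^j$ into residues mod $P^{j-1}$ and mod $P$ exposes an inner sum $\sum_{T \pmod P} e(V_1 T/P)$ that vanishes since $P \nmid V_1$, forcing $G(V,\chi_{P^i}) = 0$. The main technical subtlety I anticipate is pinning down the sign of the quadratic Gauss sum appearing in the odd $i = \alpha + 1$ case: evaluating it as $+|P|^{1/2}$ with no sign ambiguity requires $\chi_P(-1) = 1$, which follows from the standing assumption $q \equiv 1 \pmod 4$ (so that $|P| \equiv 1 \pmod 4$). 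The remark following the proposition already invokes the single-prime version of this identity from \cite{rudnickfrob}, so I would simply cite that formula rather than rederive it from the Hayes exponential.
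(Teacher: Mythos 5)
Your proof is correct, and it follows the same standard strategy---CRT for the multiplicativity in part (1), then $P$-adic local analysis in part (2) separating the additive character (which depends on $W$ modulo $P^{i-\alpha}$) from the quadratic character (which depends only on $W$ modulo $P$)---that the cited Lemma~2.3 of \cite{sound} uses in the number field setting; the paper's one-line ``this is analogous'' is deferring to exactly this argument. One caution on your closing remark about the sign: $\chi_P(-1)=1$ (which indeed holds because $|P|=q^{d(P)}\equiv 1\pmod 4$) only shows that the local Gauss sum over $\mathbb{F}_q[x]/(P)$ squares to $|P|$, so it equals $\pm|P|^{1/2}$; it does not by itself pin the sign to $+$. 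Fixing the sign requires a separate evaluation of the Hayes--exponential Gauss sum (e.g.\ Proposition~7 of \cite{rudnickfrob}, or the Hasse--Davenport computation the paper spells out in its final section when removing the primality assumption on $q$). Also be careful not to lean on the Remark after Proposition~\ref{poissonmonic} for this sign: that remark \emph{deduces} the single-prime identity $G(V,\chi_P)=\bigl(\tfrac{V}{P}\bigr)|P|^{1/2}$ from the present lemma, so citing it here would be circular; cite \cite{rudnickfrob} directly.
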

\begin{proof}
This is analogous to Lemma $2.3$ in \cite{sound}. 
\end{proof}
\begin{proof}[Proof of Proposition \ref{poissonmonic}]
Note that if $m \geq n$, then $\sum_{g \in \mathcal{M}_m} \chi_f(g)=0$ if $f$ is not a square, and $\sum_{g \in \mathcal{M}_m} \chi_f(g) = q^m \frac{ \phi(f)}{|f|}$ if $f$ is a square. Combining this observation with the fact that $G(0,\chi_f)$ is nonzero if and only if $f$ is a square, in which case $G(0,\chi_f)= \phi(f)$ (which follows from Lemma \ref{computeg}), Proposition \ref{poissonmonic} follows.
 
 Now assume that $m<n$. We will first prove the following more general Poisson summation formula, which holds for any character $\chi \pmod f$. If $d(f)=n$ and $m<n$, then we will prove that
\begin{equation}
 \sum_{g \in \mathcal{M}_m} \chi(g) = \frac{q^m}{|f|} \sum_{d(V) \leq n-m-1} G(V,\chi) e \left( \frac{-V x^m}{f} \right), \label{initialpoisson}
  \end{equation} where the sum on the right hand side is over all polynomials $v$, not necessarily monic. Note that this form of Poisson summation also holds for character sums in intervals in $\mathbb{F}_q[x]$, as defined in \cite{rudnickkeating}, and the proof is similar to the proof of \eqref{initialpoisson}. For our purposes, considering the sum over monic polynomials is enough. 
 
  We begin the proof of \eqref{initialpoisson} by noticing that for any polynomial $g$ in $\mathbb{F}_q[x]$, 
  \begin{equation} \chi(g) = \frac{1}{|f|} \sum_{V \pmod f} e \left( \frac{Vg}{f} \right) G(-V,\chi) . \label{chig}
  \end{equation} Indeed, using the definition of the Gauss sum, 
\begin{equation}
\frac{1}{|f|} \sum_{V \pmod f} e \left( \frac{Vg}{f} \right) G(-V,\chi) = \frac{1}{|f|} \sum_{u \pmod f} \chi(u) \sum_{V \pmod f} e \left( \frac{V(g-u)}{f} \right) . \label{int1}
\end{equation} If $u \neq g$, then $\displaystyle \sum_{V \pmod f} e \left( \frac{V(g-u)}{f} \right) =0$, since we can pick a polynomial $h$ such that $e \left( \frac{h(g-u)}{f} \right) \neq 0$, and then
$$ e \left( \frac{h(g-u)}{f} \right)  \sum_{V \pmod f} e \left( \frac{V(g-u)}{f} \right)   = \sum_{V \pmod f} e \left( \frac{ (V+h)(g-u)}{f} \right) = \sum_{V \pmod f} e \left( \frac{V(g-u)}{f} \right) .$$
Hence the only nonzero term in \eqref{int1} is given by $u=g$, so \eqref{chig} follows.

For $g \in \mathcal{M}_m$, we write $g = x^m+u$, with $d(u) \leq m-1$. Using \eqref{chig} , we have
\begin{align}
\sum_{g \in \mathcal{M}_m} \chi(g) &= \frac{1}{|f|} \sum_{V \pmod f} G(-V,\chi) e \left( \frac{V x^m}{f} \right) \sum_{d(u) \leq m-1} e \left( \frac{Vu}{f} \right) \nonumber \\
&=  \frac{1}{|f|} \sum_{d(V) \leq n-m-1} G(-V,\chi) e \left( \frac{V x^m}{f} \right) \sum_{d(u) \leq m-1} e \left( \frac{Vu}{f} \right)  \nonumber \\
&+  \frac{1}{|f|} \sum_{n-m \leq d(V) \leq n-1} G(-V,\chi) e \left( \frac{V x^m}{f} \right) \sum_{d(u) \leq m-1} e \left( \frac{Vu}{f} \right) . \label{int2}
\end{align} Let $S_1$ be the first summand above, and $S_2$ the second. We first evaluate $S_1$. When $d(V) \leq n-m-1$ and $d(u) \leq m-1$, we have $ e \left( \frac{Vu}{f} \right)=1$, and since there are $q^m$ polynomials of degree less than or equal to $m-1$, then under $V \mapsto -V$, it follows that \begin{equation}
S_1= \frac{q^m}{|f|}  \sum_{d(V) \leq n-m-1} G(V,\chi) e \left( \frac{-V x^m}{f} \right). \label{s1} \end{equation} Now we'll show that $S_2=0$.  We write
$$ \sum_{d(u) \leq m-1} e \left( \frac{Vu}{f} \right) = 1+ \sum_{i=0}^{m-1} \sum_{c=1}^{q-1} \sum_{u \in c \mathcal{M}_i} e \left( \frac{Vu}{f} \right). $$
If $i \leq n-2-d(V)$, then $e \left( \frac{Vu}{f} \right) =1$. If $i \geq n- d(V)$, then $\displaystyle \sum_{u \in c \mathcal{M}_i} e \left( \frac{Vu}{f} \right) = 0$. This follows from the more general fact that if $a \in \mathbb{F}_q \left( \left( \frac{1}{x} \right) \right)$ and $\nu(a)>0$, then 
$$ \sum_{u \in c \mathcal{M}_i} e(au) = \begin{cases} q^i e(c x^i a) & \mbox{ if } \nu(a)>i \\
0 & \mbox{ otherwise. } \end{cases} $$ For a proof of this, see Lemma $3.7$ in \cite{hayes}. Since $n-1-d(V) \leq m-1$, combining all of the above it follows that
\begin{align*}
\sum_{d(u) \leq m-1} e \left( \frac{Vu}{f} \right) &= 1+ \sum_{i=0}^{n-2-d(V)} \sum_{c=1}^{q-1} q^i + \sum_{c=1}^{q-1} \sum_{u \in c  \mathcal{M}_{n-1-d(V)}} e \left( \frac{Vu}{f} \right) \\
&= 1+ (q-1) \frac{q^{n-1-d(V)}-1}{q-1} - q^{n-1-d(V)} = 0,
\end{align*} so $S_2=0$. Combining this with \eqref{int2} and \eqref{s1} concludes the proof of \eqref{initialpoisson}. 

Now using \eqref{initialpoisson} for $\chi_f$ a Dirichlet character, we get that
\begin{equation}
 \sum_{g \in \mathcal{M}_m} \chi_f(g) = \frac{q^m}{|f|} \left[ G(0, \chi_f) + \sum_{d(V) \leq n-m-2} G(V,\chi_f) e \left( \frac{-V x^m}{f} \right) + \sum_{d(V) = n-m-1} G(V,\chi_f) e \left( \frac{-V x^m}{f} \right) \right]. \label{sum1} \end{equation} When $d(V) \leq n-m-2$, we have $e \left( \frac{-Vx^m}{f} \right)=1$, so $ \displaystyle \sum_{d(V) \leq n-m-2} G(V,\chi_f) e \left( \frac{-V x^m}{f} \right)  = \displaystyle \sum_{d(V) \leq n-m-2} G(V,\chi_f)$. We claim that 
\begin{equation} \sum_{d(V) \leq n-m-2 } G(V, \chi_f)   = \begin{cases} 0 & \mbox{ if $d(f)$ odd} \\
(q-1) \displaystyle \sum_{V \in \mathcal{M}_{\leq n-m-2}} G(V, \chi_f) & \mbox{ if $d(f)$ even}
 \end{cases} \label{small} \end{equation} and
 \begin{equation} \sum_{d(V)=n-m-1} G(V, \chi_f) e \left( \frac{-V x^m}{f} \right) = \begin{cases} \sqrt{q} \displaystyle \sum_{V \in \mathcal{M}_{n-m-1}} G(V,\chi_f) & \mbox{ if $d(f)$ odd} \\
 - \displaystyle \sum_{V \in \mathcal{M}_{n-m-1}} G(V, \chi_f) & \mbox{ if $d(f)$ even}
 \end{cases}  \label{big} \end{equation} We'll only prove \eqref{big} when $d(f) $ is odd, since the other case and \eqref{small} are similar. Note that $G(cV,\chi_f) = \chi_f(c^{-1}) G(V,\chi_f)$. When $V \in \mathcal{M}_{n-m-1}$ and $c \in \mathbb{F}_q^{*}$,  $e  \left( \frac{-cVx^m}{f} \right) = e(-c/q)$. Then
 \begin{align*}
\sum_{d(V)=n-m-1} G(V, \chi_f) e \left( \frac{-V x^m}{f} \right) &=   \sum_{V \in \mathcal{M}_{n-m-1}} \sum_{c=1}^{q-1} G(cV,\chi_f) e \left( \frac{-cV x^m}{f} \right)  \\
&= \sum_{V \in \mathcal{M}_{n-m-1}} G(V, \chi_f) \sum_{c=1}^{q-1} \chi_f(c^{-1}) e (-c/q) \\
&= \sqrt{q} \sum_{V \in \mathcal{M}_{n-m-1}} G(V, \chi_f).
\end{align*}
Now by Lemma \ref{computeg}, $G(0,\chi_f)$ is nonzero if and only if $f$ is a square (hence $d(f)$ even). Using this observation together with \eqref{sum1}, \eqref{small} and \eqref{big} yields the conclusion.
\end{proof}
\section{Setup of the problem} 
\label{setup}
Using Lemma \ref{firstpoint} and the functional equation \eqref{ecfunct}, we write

$$ \sum_{D \in \mathcal{H}_{2g+1}} L \Big( \tfrac{1}{2}, \chi_D \Big) = S_g +S_{g-1},  $$ where
$$S_g = \sum_{f \in \mathcal{M}_{\leq g}} \frac{1}{\sqrt{|f|}} \sum_{\substack{C | f^{\infty} \\ C \in \mathcal{M}_{\leq g}}} \sum_{h \in \mathcal{M}_{2g+1-2 d(C)}} \chi_f(h) - q  \sum_{f \in \mathcal{M}_{\leq g}} \frac{1}{\sqrt{|f|}} \sum_{\substack{C | f^{\infty} \\ C \in \mathcal{M}_{ \leq g-1}}} \sum_{h \in \mathcal{M}_{2g-1-2 d(C)}} \chi_f(h)  $$ and 
 $$S_{g-1}=  \sum_{f \in \mathcal{M}_{\leq g-1}} \frac{1}{\sqrt{|f|}} \sum_{\substack{C | f^{\infty} \\  C \in \mathcal{M}_{\leq g}}} \sum_{h \in \mathcal{M}_{2g+1-2 d(C)}} \chi_f(h) - q  \sum_{f \in \mathcal{M}_{\leq g-1}} \frac{1}{\sqrt{|f|}} \sum_{\substack{C | f^{\infty} \\  C \in \mathcal{M}_{ \leq g-1}}} \sum_{h \in \mathcal{M}_{2g-1-2 d(C)}} \chi_f(h) . $$ Note that in the equation above, when $C \in \mathcal{M}_g$, we express
 $$  \sum_{\substack{C | f^{\infty} \\  C \in \mathcal{M}_{g}}} 1 = \frac{1}{2 \pi i} \oint_{|u|=r_1} \frac{1}{u^{g+1}\prod_{P|f} (1-u^{d(P)})} \, du,$$ where $r_1<1$, so choosing $r_1=q^{-\epsilon}$, it follows that $ \sum_{\substack{C | f^{\infty} \\  C \in \mathcal{M}_{g}}} 1 \ll q^{\epsilon g}$. Then the term in the expression for $S_{g-1}$ corresponding to $C \in \mathcal{M}_g$ is bounded by $O(q^{g/2(1+\epsilon)})$. We rewrite
 \begin{equation}
 S_{g-1} = \sum_{f \in \mathcal{M}_{\leq g-1}} \frac{1}{\sqrt{|f|}} \sum_{\substack{C | f^{\infty} \\  C \in \mathcal{M}_{\leq g-1}}}  \left( \sum_{h \in \mathcal{M}_{2g+1-2 d(C)}} \chi_f(h) - q \sum_{h \in \mathcal{M}_{2g-1-2d(C)}} \chi_f(h) \right)+O(q^{g/2(1+\epsilon)}),  \label{sgminus1} \end{equation} and similarly
 $$ S_g= \sum_{f \in \mathcal{M}_{\leq g}} \frac{1}{\sqrt{|f|}} \sum_{\substack{C | f^{\infty} \\  C \in \mathcal{M}_{\leq g-1}}}  \left( \sum_{h \in \mathcal{M}_{2g+1-2 d(C)}} \chi_f(h) - q \sum_{h \in \mathcal{M}_{2g-1-2d(C)}} \chi_f(h) \right) +O(q^{g/2(1+\epsilon)}) .$$
 Now write $S_g= S_{g,\text{e}}+S_{g,\text{o}} +O(q^{g/2(1+\epsilon)})$ , where $S_{g,\text{e}}$ and $S_{g,\text{o}}$ denote the sum over monic polynomials $f$ of even and odd degree respectively. Similarly define $S_{g-1,\text{e}}$ and $S_{g-1,\text{o}}$. We focus on $S_{g-1}$. When $d(f)$ is odd, we use the Poisson summation formula as given in Proposition \ref{poissonmonic} for the sums over $h$ in \eqref{sgminus1}. 
 Then
 \begin{equation}
 S_{g-1,\text{o}} = q^{2g+1} \sqrt{q} \sum_{\substack{ f \in \mathcal{M}_{\leq g-1} \\ d(f) \text{ odd}}}  \frac{1}{|f|^{\frac{3}{2}}}  \sum_{\substack{C| f^{\infty} \\ C \in \mathcal{M}_{\leq g-1}}} |C|^{-2} \left( \sum_{V \in \mathcal{M}_{d(f)-2g-2+2d(C)}} G(V,\chi_f) - \frac{1}{q} \sum_{V \in \mathcal{M}_{d(f)-2g+2d(C)}} G(V,\chi_f)\right). \label{sgmodd} \end{equation}
 
For the term $S_{g-1,\text{e}}$, we use Proposition \ref{poissonmonic} again. Let $M_{g-1}$ be the term corresponding to the sum over $V=0$. Note that by Lemma \ref{computeg}, $G(0,\chi_f)$ is nonzero if and only if $f$ is a square, in which case $G(0,\chi_f)= \phi(f)$. Write $S_{g-1,\text{e}} = M_{g-1} + S_1$, where 
 \begin{equation*}
 M_{g-1}= q^{2g+1} \left(1-\frac{1}{q} \right) \sum_{\substack{f \in \mathcal{M}_{\leq g-1} \\ f= \square}} \frac{\phi(f)}{|f|^{\frac{3}{2}}} \sum_{\substack{C | f^{\infty} \\ C \in \mathcal{M}_{\leq g-1}}} \frac{1}{|C|^2}  ,\end{equation*} and
 \begin{align}
 S_1 &= q^{2g+1} \sum_{\substack{ f \in \mathcal{M}_{\leq g-1} \\ d(f) \text{ even}}}  \frac{1}{|f|^{\frac{3}{2}}}  \sum_{\substack{C| f^{\infty} \\ C \in \mathcal{M}_{\leq g-1}}} |C|^{-2} \bigg[ (q-1) \sum_{ V \in \mathcal{M}_{\leq d(f)-2g-3+2d(C)} } G(V, \chi_f) - \sum_{V \in \mathcal{M}_{d(f)-2g-2+2 d(C) }} G(V,\chi_f)  \nonumber \\
 & - \frac{q-1}{q}  \sum_{ V \in \mathcal{M}_{\leq d(f)-2g-1+2d(C)} } G(V, \chi_f) + \frac{1}{q} \sum_{ V \in \mathcal{M}_{d(f)-2g+2d(C)} } G(V, \chi_f)     \bigg].\label{s1}
 \end{align}
In the equation above, let $S_{g-1}(V=\square)$ be the sum over $V$ square and $S_{1}(V \neq \square)$ be the sum over $V$ non-square. Then $S_1 = S_{g-1}(V=\square)+ S_{1}(V \neq \square)$. 

When $V=l^2$, we write
\begin{align}
S_{g-1}(V=\square) &= q^{2g+1} \sum_{\substack{ f \in \mathcal{M}_{\leq g-1} \\ d(f) \text{ even}}}  \frac{1}{|f|^{\frac{3}{2}}}  \sum_{\substack{C| f^{\infty} \\ C \in \mathcal{M}_{\leq g-1}}} |C|^{-2} \bigg[ (q-1) \sum_{ l \in \mathcal{M}_{\leq \frac{d(f)}{2}-g-2+d(C)} } G(l^2, \chi_f) - \sum_{l \in \mathcal{M}_{\frac{d(f)}{2}-g-1+d(C)}} G(l^2,\chi_f)  \nonumber \\
& -\frac{q-1}{q}  \sum_{ l \in \mathcal{M}_{\leq \frac{d(f)}{2}-g-1+d(C)} } G(l^2, \chi_f)  + \frac{1}{q} \sum_{ l \in \mathcal{M}_{ \frac{d(f)}{2}-g+d(C)} } G(l^2, \chi_f) \bigg] .\label{sgsquare}
\end{align} 
Similarly define $S_{g}(V=\square)$. We'll evaluate these in section \ref{sec:secondary}.
Note that in equation \eqref{sgmodd}, when $d(f)$ is odd, $d(V)$ is also odd, so $V$ cannot be a square. Define $S_{g-1}(V \neq \square) = S_{g-1,\text{o}}+S_{1}(V \neq \square) $. Similarly define $S_g(V \neq \square)$. We'll bound $S_{g-1}(V \neq  \square)$ and $S_g(V \neq \square)$ in section \ref{sec:nonsquare}.
\begin{remark}
One way of explaining the term of size $q^{2g/3}$ in the asymptotic formula is by looking at those $C$ with $d(C)=g-d(f)/2$ in \eqref{sgsquare}. Then we have $l=1$, and $G(1,\chi_f)$ is nonzero if and only if $f$ is square-free, in which case $G(1,\chi_f)= \sqrt{|f|}$. Since we are summing over $C | f^{\infty}$ with $d(C) =g-d(f)/2$, when $C$ is square-free, we must have that $d(C) \leq d(f)$. Then  $d(f) \geq 2g/3$, which would contribute a term of size $q^{2g/3}$ in the formula \eqref{sgsquare}. We'll evaluate the term \eqref{sgsquare} in section \ref{sec:secondary} using analytic methods, so the ranges in which we sum will be less transparent. 
\end{remark}

   \section{Main term}
  \label{main}
  In this section, we evaluate the main terms $M_g$ and $M_{g-1}$. Recall that
  \begin{equation} 
  \label{5.1} 
  M_g= q^{2g+1} \left(1-\frac{1}{q} \right) \sum_{\substack{f \in \mathcal{M}_{\leq g} \\ f= \square}} \frac{\phi(f)}{|f|^{\frac {3}{2}}} \sum_{\substack{C | f^{\infty} \\ C \in \mathcal{M}_{\leq g-1}}} \frac{1}{|C|^2} ,
  \end{equation}
  and a similar expression holds for $M_{g-1}$.  
  
  Note that for any $f$ and any $\epsilon >0$, 
  $$ 
  \sum_{\substack{ {C|f^{\infty}} \\ {d(C)\ge g}}} \frac{1}{|C|^2} \le \frac{1}{q^{(2-\epsilon)g} }\sum_{ \substack{ { C|f^{\infty}} \\ {d(C)\ge g} } } \frac{1}{|C|^\epsilon} 
  \le \frac{1}{q^{(2-\epsilon)g}}  \prod_{P|f} \Big(1 -\frac{1}{|P|^{\epsilon}}\Big)^{-1}. 
  $$ 
  If now the degree of $f$ is at most $g$, then the product above is $\ll q^{g\epsilon}$ (since $q$ is 
  fixed, and $g$ is large).  Thus 
  $$ 
  \sum_{\substack{ {C|f^{\infty}} \\ {C \in \mathcal{M}_{\leq g-1}}}} \frac{1}{|C|^2} = 
  \sum_{C|f^{\infty}} \frac{1}{|C|^2} + O(q^{-(2-\epsilon)g} ) = 
  \prod_{P|f} \Big(1-\frac{1}{|P|^2}\Big)^{-1} + O(q^{-(2-\epsilon)g}).
  $$ 
  
  Write $f$ in \eqref{5.1} as $f= l^2$ and use the relation above.  Thus 
  \begin{align}
  M_g &= q^{2g+1}\Big(1-\frac 1q\Big) \sum_{l \in {\mathcal M}_{\le [ \frac{g}{2} ]}} \frac{\phi(l^2)}{|l|^3} \prod_{P|l} \Big(1-\frac{1}{|P|^2}\Big)^{-1} +O(q^{\epsilon g}) \nonumber\\ 
  &= 
  \frac{q^{2g+1}}{\zeta(2)} \sum_{l \in {\mathcal M}_{\le [ \frac{g}{2} ]}} \frac{1}{|l|} \prod_{P| l} \Big(\frac{|P|}{1+|P|}\Big) + O(q^{\epsilon g}). 
  \label{maintermg}
  \end{align}
  
  Below, we shall frequently make use of the following observation, which may be viewed as the function field analogue of Perron's formula.  If the power series $\sum_{n=0}^{\infty} a(n) z^{n}$ is absolutely convergent in $|z|\le r <1$ then 
 \begin{equation} 
 \label{perron} 
 \sum_{n=0}^{N} a(n)  = \frac{1}{2\pi i} \int_{|z|=r}  \Big(\sum_{n=0}^{\infty} a(n) z^n \Big) \frac{z^{-N-1}}{1-z} dz. 
 \end{equation} 
   Using \eqref{perron} in \eqref{maintermg} we obtain, for $r<1/q$,  
   $$
   M_g = \frac{q^{2g+1}}{\zeta(2)} \frac{1}{2\pi i} \int_{|u|=r}  \sum_{l \in {\mathcal M}} u^{d(l)} \prod_{P|l} \Big(\frac{|P}{1+|P|}\Big) \frac{(qu)^{-[ \frac{g}{2} ]}}{(1-qu)} \frac{du}{u} + O(q^{\epsilon g}). 
   $$ 
 Now, by multiplicativity, we may write 
 $$ 
 \sum_{l \in {\mathcal M}} u^{d(l)} \prod_{P|l} \Big(\frac{|P|}{1+|P} \Big) = \prod_P \Big(1 + \frac{|P|}{1+|P|} \frac{u^{d(P)}}{1-u^{d(P)}}\Big) =    
   \mathcal{Z}(u) \mathcal{C}(u)= \frac{{\mathcal C}(u)}{1-qu}, 
 $$
 where 
 \begin{equation} 
 \label{Pdef} 
 {\mathcal C}(u) = \prod_{P} \Big( 1- \frac{u^{d(P)}}{1+|P|} \Big).
 \end{equation}
 From its definition \eqref{Pdef} we see that ${\mathcal C}(u)$ is analytic in $|u|<1$, but we may further write 
 \begin{equation} 
 \label{Pdef2} 
 {\mathcal C}(u) = {\mathcal Z}(u/q)^{-1} \prod_P \Big ( 1+ \frac{u^{d(P)}}{(1+|P|)(|P|-u^{d(P)})}\Big), 
 \end{equation} 
 which furnishes an analytic continuation of ${\mathcal C}(u)$ to the region $|u|<q$.  
 
   From these remarks we conclude that for any $r<1/q$ 
   \begin{equation} 
   \label{Mgformula} 
   M_g = \frac{q^{2g+1}}{\zeta(2)} \frac{1}{2\pi i} \int_{|u|=r} {\mathcal C}(u) \frac{(qu)^{-[ \frac{g}{2} ]}}{(1-qu)^2} \frac{du}{u} + O(q^{\epsilon g}), 
   \end{equation} 
   and similarly 
   \begin{equation} 
   \label{Mgformula2} 
   M_{g-1} = \frac{q^{2g+1}}{\zeta(2)} \frac{1}{2\pi i} \int_{|u|=r} 
   {\mathcal C}(u) \frac{(qu)^{-[ \frac{g-1}{2} ]}}{(1-qu)^2} \frac{du}{u} + O(q^{\epsilon g}). 
   \end{equation} 
   The integrals in \eqref{Mgformula} and \eqref{Mgformula2} have double poles at $u=1/q$ and evaluating the residues we can obtain asymptotics for $M_g$ and $M_{g-1}$ with an error term of size $q^{(1+\epsilon)g}$.  
 We leave the expressions for $M_g$ and $M_{g-1}$ in the integral forms above, since we will show in the 
 next section how it matches up with other main terms leading finally to an asymptotic formula with an improved 
 error term.  
 
  \section{Contribution from $V$ square}
 \label{sec:secondary}
In this section, we will evaluate the terms $S_{g-1}(V=\square)$ and $S_g(V=\square)$. Recall from section \ref{setup} the formula \eqref{sgsquare} for $S_{g-1}(V=\square)$. The next lemma is the main result of this section.
\begin{lemma}
Using the same notation as before, we have that
 $$S_{g-1} (V=\square) = - \frac{q^{2g+1}}{\zeta(2)} \frac{1}{2 \pi i} \oint_{|u|=R} {\mathcal C}(u) \frac{(qu)^{-[ \frac{g}{2} ]}}{(1-qu)^2} \frac{du}{u} + q^{\frac{2g+1}{3}} P_1(g)+ O(q^{g/2(1+\epsilon)}),$$   and
 $$S_{g} (V=\square) = - \frac{q^{2g+1}}{\zeta(2)} \frac{1}{2 \pi i} \oint_{|u|=R}  {\mathcal C}(u) \frac{(qu)^{-[ \frac{g-1}{2} ]}}{(1-qu)^2}  \, \frac{du}{u} + q^{\frac{2g+1}{3}} P_2(g)+ O(q^{g/2(1+\epsilon)}),$$ with $1<R<q$, and $$\mathcal{C}(u) =\prod_P \left( 1 - \frac{u^{d(P)}}{|P|+1} \right).$$ Further, $P_1$ and $P_2$ are linear polynomials whose coefficients can be computed explicitly.  \label{tool}
 \end{lemma}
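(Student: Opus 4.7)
My plan is to convert $S_{g-1}(V=\square)$ and $S_g(V=\square)$ into contour integrals whose residues, after shifting contours, produce the displayed $\oint_{|u|=R}$ term (which will cancel against $M_g$), the secondary term $q^{(2g+1)/3}P_1(g)$, and an acceptable error.

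I would begin by evaluating $G(l^2,\chi_f)$ using Lemma \ref{computeg}: since $V=l^2$ is a square, the Legendre symbol $(V_1/P)$ appearing there is always $+1$, so by multiplicativity the Gauss sum factors into explicit local pieces depending on $\min(e_P,2\,\mathrm{ord}_P(l))$ and the parity of $e_P$. Substituting into \eqref{sgsquare} and applying the Perron-type identity \eqref{perron} to each of the four finite $l$-sums, the linear combination of Perron kernels collapses via the algebraic identity $(q-1)/(1-u) - 1/u - (q-1)/[qu(1-u)] + 1/(qu^2) = (1-qu)^2/[qu^2(1-u)]$ into a single integral. Swapping sums and integration, the sums over $f$ (monic, even degree, weight $|f|^{-3/2}$) and $C\mid f^\infty$ (weight $|C|^{-2}$) assemble into an Euler product, with the even-$d(f)$ parity constraint enforced by the standard symmetrization $(F(u)+F(-u))/2$. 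After simplification, the integrand on a small contour $|u|=r<1/q$ takes the schematic form $\mathcal{C}(u)\,H(u)\,(qu)^{-[g/2]}/[(1-qu)^2\,u]$ for some function $H(u)$ encoding the remaining analytic data.

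I would then shift the contour outward to $|u|=R$ with $1<R<q$, collecting residues. The double pole at $u=1/q$ produces exactly the displayed $-\frac{q^{2g+1}}{\zeta(2)}\frac{1}{2\pi i}\oint_{|u|=R}\mathcal{C}(u)(qu)^{-[g/2]}/(1-qu)^2 \, du/u$; when $M_g$ (whose Perron contour is the inner $|u|=r$) is added in Theorem \ref{impr}, the combination $\oint_{|u|=r}-\oint_{|u|=R}$ collapses by residues to the value at $u=1/q$, which is the linear-in-$g$ main term. The additional pole(s) of $H(u)$ in the annulus $1/q<|u|<R$ — which, per the remark following \eqref{sgsquare}, correspond to the $l=1$ contribution with squarefree $f$ and $d(C)=g-d(f)/2$ forcing $d(f)\geq 2g/3$ — produce the secondary term $q^{(2g+1)/3}P_1(g)$, with $P_1$ linear in $g$ either from a higher-order pole (where the residue involves the derivative of $(qu)^{-[g/2]}$, generating a factor of $g$) or from the combination of the different Perron kernels. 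The integral on $|u|=R$ is bounded using the analyticity of $\mathcal{C}(u)$ in $|u|<q$ provided by \eqref{Pdef2} and uniform bounds on $H(u)$, yielding $O(q^{g/2(1+\epsilon)})$ for a suitable choice of $R$.

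The same argument, with $[g/2]$ replaced by $[(g-1)/2]$, handles $S_g(V=\square)$. The main obstacle is the Euler-product bookkeeping in the first step: identifying $H(u)$ explicitly, locating its poles, and verifying that the dominant pole in the annulus $1/q<|u|<R$ produces exactly the $q^{(2g+1)/3}$ scale with $P_1$ linear in $g$. Subtleties include handling the $u^{-d(P)/2}$ factors arising from the odd-$e_P$ local Gauss sums (absorbed via the $(F(u)+F(-u))/2$ symmetrization that enforces even $d(f)$), and confirming that $\mathcal{C}(u)$ emerges naturally from the Euler product as the prefactor of the main-term integrand.
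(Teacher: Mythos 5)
Your high-level strategy (consolidate the four Perron kernels, build an Euler product, shift contours, match the $u=1/q$ pole against $M_g$, and trace the $q^{2g/3}$ scale to the $l=1$, squarefree-$f$ diagonal) is the right one, and your algebraic identity combining the four kernels into $(1-qu)^2/[qu^2(1-u)]$ is correct. But the central claim that ``after simplification, the integrand on a small contour $|u|=r$ takes the schematic form $\mathcal{C}(u)\,H(u)\,(qu)^{-[g/2]}/[(1-qu)^2 u]$'' is where the argument breaks. There is no such single-variable function $H(u)$: after the first Perron integral in $z$ (extracting the $l$-sum) one is still left with the truncated, even-degree sum over $f$, and the paper handles it by applying a second Perron-type extraction in a \emph{new} variable $w$, producing the genuinely two-variable generating function $\mathcal{B}(z,w)$ of Lemma~\ref{bzw}. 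Everything downstream depends on this two-variable structure. In particular, the secondary term $q^{(2g+1)/3}P_1(g)$ comes from the residue at the \emph{diagonal} pole $w=qz$, followed by a double pole at $z=q^{-4/3}$ in the residue expression; the matching term $-\frac{q^{2g+1}}{\zeta(2)}\oint_{|u|=R}$ comes from the residue at $w=1/q$, transformed by the change of variables $z=1/(qu)$. Neither of these is a pole of a single function $H(u)$ in an annulus $1/q<|u|<R$, and a quick scale count confirms the mismatch: a residue of $q^{2g+1}(qu)^{-[g/2]}$ at $|u_0|\in(1/q,q)$ has size between roughly $q^{g}$ and $q^{2g}$, so no pole in that range can produce $q^{2g/3}$.

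Two further points. First, the even-$d(f)$ constraint is not handled in the paper by the symmetrization $(F(u)+F(-u))/2$; rather, the Perron kernel $(1-q^2w^2z)^{-1}(q^2w^2z)^{-[(g-1)/2]}$ contains only even powers of $w$, so the parity restriction falls out automatically. Second, your description of the $\oint_{|u|=R}$ term as ``produced by the double pole at $u=1/q$'' conflates a residue with the remaining contour integral; in the paper that term \emph{is} the whole value of the $w=1/q$ residue contribution (rewritten on $|u|=\sqrt{q}$, hence any $|u|=R$ with $1<R<q$), and it only becomes a residue at $u=1/q$ once combined with $M_g$ (whose Perron contour is inside $|u|=1/q$) in the proof of Theorem~\ref{impr}. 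To repair the proposal you would need to explicitly introduce the second contour variable and the two-variable Euler product of Lemmas~\ref{bzw}--\ref{poleqz}; without them the secondary term and the matching integral cannot be produced.
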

 
Before we prove Lemma \ref{tool}, we need the following results.
\begin{lemma} \label{bzw}
 For $|z|>1/q^2$, let $$\mathcal{B}(z,w) = \sum_{f \in \mathcal{M}} w^{d(f)} \prod_{P|f} \left(1 - \frac{1}{|P|^2 z^{d(P)}} \right)^{-1}  \left( \sum_{l \in \mathcal{M}} z^{d(l)} \frac{G(l^2,\chi_f)}{\sqrt{|f|}} \right) .$$
Then $$
\mathcal{B}(z,w) = \mathcal{Z}(z) \mathcal{Z}(w) \mathcal{Z}(qw^2z) \prod_P \mathcal{B}_P(z,w)$$ where
$$\mathcal{B}_P(z,w) = 1 + \frac{ w^{d(P)} - (w^2z)^{d(P)} |P|^2 - (wz^2)^{d(P)} |P|^2 + (w^3z^2)^{d(P)}|P|^2 + (w^2z)^{d(P)}|P| - (w^3z)^{d(P)}|P|}{z^{d(P)}|P|^2-1} .
$$
Moreover, $\prod_P \mathcal{B}_P(z,w)$ converges absolutely for $|w|<q|z|, |w|< 1/\sqrt{q}$, and $ |wz|<1/q$. 
\end{lemma}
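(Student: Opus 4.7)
The plan is to express $\mathcal{B}(z,w)$ as an Euler product over primes $P$ and then perform the local computation explicitly. First I would use the multiplicativity of the Gauss sum in the modulus (Lemma \ref{computeg}(1)) together with the observation that for $l = P^{a}l'$ with $P \nmid l'$, one has $G(l^2, \chi_{P^e}) = \chi_{P^e}((l')^{-2}) G(P^{2a}, \chi_{P^e}) = G(P^{2a}, \chi_{P^e})$, the quadratic Jacobi character killing the unit square $(l')^{-2}$. This, together with the separability of $w^{d(f)} z^{d(l)}$ and of the prefactor $\prod_{P|f}(1-1/(|P|^2 z^{d(P)}))^{-1}$, lets me write $\mathcal{B}(z,w) = \prod_P L_P(z,w)$ as a local product indexed by the local exponents $e = v_P(f) \geq 0$ and $a = v_P(l) \geq 0$.

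Next I would compute each local factor $L_P$ in closed form. Writing $u = w^{d(P)}$, $v = z^{d(P)}$, $Q = |P|$, the $e=0$ part contributes $\sum_{a\ge 0} v^a = (1-v)^{-1}$ since $G(P^{2a}, \chi_1) = 1$. For the $e \geq 1$ part I apply Lemma \ref{computeg}(2) with $V = P^{2a}$, $V_1 = 1$, so the Gauss sum vanishes unless $e$ is even with $2\le e \leq 2a$ (giving $\phi(P^e)$) or $e = 2a + 1$ (giving $Q^{2a+1/2}$). After summing two interlocked geometric series in $a$ and $e$ and multiplying by the prefactor $Q^2 v/(Q^2 v - 1)$, I expect to arrive at
$$L_P(z,w) = \frac{1 - Q^2 v - Q^2 uv(1-v) - u^2 Qv(1-Qv)}{(1-v)(1 - u^2 Qv)(1 - Q^2 v)}.$$
The denominator factors $(1-v)^{-1}$ and $(1 - u^2 Qv)^{-1}$ are exactly the local Euler factors of $\mathcal{Z}(z)$ and of $\mathcal{Z}(qw^2 z)$. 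To produce the missing $\mathcal{Z}(w)$ factor I would artificially insert $(1-u)/(1-u)$, setting $\mathcal{B}_P := (1-u)(1-v)(1 - u^2 Qv) L_P$; expanding $(1-u)\cdot[\text{numerator above}]$ and regrouping by powers of $u$ then gives the six-monomial formula stated in the lemma, with denominator $Q^2 v - 1$.

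Finally, for absolute convergence of $\prod_P \mathcal{B}_P(z,w)$, I would use that the hypothesis $|z| > 1/q^2$ makes $|Q^2 v - 1| \asymp Q^2 |v|$ and compare each of the six numerator monomials to this denominator. The three stated inequalities $|w| < 1/\sqrt{q}$, $|wz| < 1/q$, and $|w| < q|z|$ are calibrated so that the ratios reduce to terms of the form $|w/(q^2 z)|^{d(P)}$, $|w^2|^{d(P)}$, $|wz|^{d(P)}$, $|w^3 z|^{d(P)}$, $|w^2/q|^{d(P)}$, $|w^3/q|^{d(P)}$, each bounded by $q^{-(1+\delta) d(P)}$ for some fixed $\delta > 0$. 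Thus $\mathcal{B}_P = 1 + O(|P|^{-1-\delta})$, and the product converges by the prime polynomial theorem. The main obstacle of the proof is the algebraic bookkeeping in the second step: the two nested geometric sums and the subsequent rearrangement produce a number of cancellations (for instance between $-u^2 Q^3 v^2$ and $+u^2 Q^3 v^2$ arising from different sources), and one has to be careful to verify that the final expression matches the asymmetric six-term combination in the statement.
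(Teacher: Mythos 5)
Your proposal is correct and follows essentially the same route as the paper: both proofs are an Euler-product computation driven by Lemma \ref{computeg}, exploiting the key fact that $G(l^2,\chi_{P^e})$ depends on $l$ only through $v_P(l)$ because the Legendre factor $\left(\tfrac{(l')^2}{P}\right)=1$. The only (minor) organizational difference is that the paper first sums over $f$ to extract $\mathcal{Z}(w)$ and obtain equation \eqref{apl}, then introduces the sum over $l$, while you compute the joint local factor $L_P$ over both exponents $(v_P(f),v_P(l))$ in one pass and then peel off the three zeta factors at the end; I checked your closed form for $L_P$ and the resulting six-monomial expression for $\mathcal{B}_P$ and they agree with the statement.
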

\begin{proof}
 We rewrite $$B(z,w) = \sum_{l \in \mathcal{M}} z^{d(l)} \sum_{f \in \mathcal{M}} w^{d(f)} \frac{G(l^2,\chi_f)}{\sqrt{|f|}} \prod_{P|f} \left(1 - \frac{1}{|P|^2 z^{d(P)}} \right)^{-1} . $$ Recall Lemma \ref{computeg}, which evaluates $G(V,\chi_f)$ (a multiplicative function of $f$). Using this and an Euler product computation, it follows that
\begin{align}
  \sum_{f \in \mathcal{M}} w^{d(f)} \frac{G(l^2,\chi_f)}{\sqrt{|f|}} \prod_{P|f} \left(1 - \frac{1}{|P|^2 z^{d(P)}} \right)^{-1}  &= \mathcal{Z}(w) \prod_{P \nmid l } \left( 1+ \frac{w^{d(P)}}{(q^2z)^{d(P)}-1} - \frac{w^{2 d(P)}}{ 1- \frac{1}{(q^2z)^{d(P)}}} \right) \nonumber  \\& \prod_{P|l} (1-w^{d(P)}) \left(1 + \frac{1}{1- \frac{1}{(q^2z)^{d(P)}}} \sum_{i=1}^{\infty} \frac{w^{d(P)i} G(l^2, \chi_{P^i})}{|P|^{i/2}} .\right) \label{apl} \end{align}
Now we introduce the sum over $l$ and using Lemma \ref{computeg} again and manipulating Euler products, the expression for $\mathcal{B}(z,w)$ follows. The absolute convergence of $\prod_P \mathcal{B}_P(z,w)$ follows directly from the expression of $\mathcal{B}_P(z,w).$

\end{proof}
Using Lemma \ref{bzw} and an Euler product computation, we get the following.
\begin{lemma}
\label{poleqz}
Using the previous notation, we have that 
$$ \prod_P \mathcal{B}_P(z,w)=  \mathcal{Z} \left( \frac{w}{q^2z} \right) \mathcal{Z}(w^2)^{-1} \prod_P \mathcal{D}_P(z,w),$$ where
\begin{align*}
 \mathcal{D}_P(z,w) &= 1+ \frac{1}{(|P|^2z^{d(P)}-1)(1+w^{d(P)})} \bigg( -w^{2d(P)}-\frac{w^{3d(P)}}{|P|} + \frac{w^{d(P)}}{|P|^2z^{d(P)}}+|P|(w^2z)^{d(P)}+(w^2z)^{d(P)}-|P|^2 (wz^2)^{d(P)} \\
 &+(w^3z)^{d(P)}-|P|^2(w^2z^2)^{d(P)} \bigg). \end{align*}
Moreover, $\prod_P \mathcal{D}_P(z,w)$ converges absolutely for $|w|^2<q|z|, |w|<q^3|z|^2,|w|<1$ and $|wz|<q^{-1}$.

\end{lemma}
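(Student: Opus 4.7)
The plan is to establish the identity prime-by-prime and then take an Euler product. Since
\[
\mathcal{Z}\Bigl(\frac{w}{q^2 z}\Bigr) \mathcal{Z}(w^2)^{-1} = \prod_P \frac{1 - w^{2d(P)}}{1 - w^{d(P)}/(|P|^2 z^{d(P)})},
\]
the claimed global identity reduces to the local identity
\[
\mathcal{B}_P(z,w) \Bigl(1 - \frac{w^{d(P)}}{|P|^2 z^{d(P)}}\Bigr) = \bigl(1 - w^{2d(P)}\bigr) \, \mathcal{D}_P(z,w)
\]
for each monic irreducible polynomial $P$. To prove this, I would introduce the shorthand $a = w^{d(P)}$, $b = z^{d(P)}$, $Q = |P|$ and reduce to a polynomial identity in $a$, $b$, $Q$ after clearing denominators. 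Using the explicit form of $\mathcal{B}_P$ from Lemma \ref{bzw} and the definition of $\mathcal{D}_P$, both sides become rational functions with common denominator $(Q^2 b - 1)(1+a) Q^2 b$, and matching numerator polynomials in $a$ coefficient-by-coefficient is a direct mechanical check.

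For the absolute convergence of $\prod_P \mathcal{D}_P(z,w)$, I would expand $\mathcal{D}_P(z,w) - 1$ using the explicit formula and identify its dominant monomial contributions as $|P| \to \infty$. Each such monomial takes the form $c \cdot w^{\alpha d(P)} z^{\beta d(P)} |P|^{-\gamma}$, and by the prime polynomial theorem $\#\{P : d(P) = n\} \sim q^n/n$, the corresponding series over primes converges iff $|w|^\alpha |z|^\beta q^{1-\gamma} < 1$. After dividing the eight monomials in the numerator of $\mathcal{D}_P - 1$ by the denominator $(Q^2 b - 1)(1+a) \sim Q^2 b$, the four dominant contributions $-a^2/(Q^2 b)$, $a/(Q^4 b^2)$, $a^2/Q$, and $-ab$ produce exactly the four stated constraints $|w|^2 < q|z|$, $|w| < q^3 |z|^2$, $|w| < 1$, and $|wz| < q^{-1}$; the remaining terms give weaker constraints that are implied by these.

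The main obstacle is the algebraic bookkeeping in verifying the local identity: it is tedious and error-prone, though purely mechanical. Organizing the computation as a polynomial identity in the three variables $a$, $b$, $Q$, and cross-checking by evaluating at convenient specializations (for instance $a = 0$, where both sides collapse to $1$, or $a = 1$, where both sides must vanish thanks to the factor $1 - a^2$) should keep the verification tractable.
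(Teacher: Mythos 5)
Your proposal is correct and follows the same route as the paper, which simply invokes ``Lemma \ref{bzw} and an Euler product computation'' without spelling out any details. The factorization $\mathcal{Z}(w/(q^2z))\mathcal{Z}(w^2)^{-1} = \prod_P \frac{1-w^{2d(P)}}{1-w^{d(P)}/(|P|^2 z^{d(P)})}$ is right (since $q^{2d(P)} = |P|^2$), the resulting local identity $\mathcal{B}_P(z,w)\bigl(1 - w^{d(P)}/(|P|^2 z^{d(P)})\bigr) = (1-w^{2d(P)})\mathcal{D}_P(z,w)$ does verify as a polynomial identity in $a=w^{d(P)}$, $b=z^{d(P)}$, $Q=|P|$ after clearing the common denominator $(Q^2b-1)Q^2b$, and your identification of the four binding monomials $-a^2/(Q^2b)$, $a/(Q^4b^2)$, $a^2/Q$, $-ab$ in $\mathcal{D}_P - 1$ correctly yields the four stated convergence constraints, with the remaining monomials (e.g.\ $-a^3/(Q^3b)$, $-a^2 b$) giving conditions implied by those four together with $|w|<1$.
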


\begin{proof}[Proof of Lemma \ref{tool}]
We now turn to the proof of Lemma \ref{tool}. Recall the expression \eqref{sgsquare} for $S_{g-1}(V= \square)$. We use \eqref{perron} twice to express the sums over $l$ as contour integrals. Then
\begin{align}
S_{g-1}(V=\square)&= q^{2g+1} \frac{1}{2 \pi i} \oint_{|z|=r_1} \sum_{\substack{f \in \mathcal{M}_{\leq g-1} \\ d(f) \text{ even}}} \frac{1}{|f|} \sum_{\substack{C | f^{\infty} \\ C \in \mathcal{M}_{\leq g-1}}} \frac{1}{|C|^2}  \left(\sum_{l \in \mathcal{M}} z^{d(l)} \frac{G(l^2,\chi_f)}{\sqrt{|f|}} \right)  \frac{  (qz-1)}{(1-z) z^{\frac{d(f)}{2}+d(C) -g}} \left( 1- \frac{1}{qz} \right) \, dz, \label{int1}
\end{align} where we can pick $r_1= q^{-1-\epsilon}$.
We have
 \begin{equation} 
 \sum_{\substack{C | f^{\infty} \\ C \in \mathcal{M}_{\leq g-1}}} \frac{1}{|C|^2 z^{d(C)}} =  \sum_{C | f^{\infty}} \frac{1}{|C|^2 z^{d(C)}} -  \sum_{\substack{C | f^{\infty} \\ d(C) \geq g}} \frac{1}{|C|^2 z^{d(C)}} = \prod_{P|f} \left( 1- \frac{1}{|P|^2z^{d(P)}} \right)^{-1} - \sum_{\substack{C | f^{\infty} \\ d(C) \geq g}} \frac{1}{|C|^2 z^{d(C)}}.  \label{sumc} \end{equation}
Then we can write the integral \eqref{int1} as a difference of two integrals. We claim that the second integral, corresponding to the sum over $C$ with $d(C) \geq g$, is bounded by  $q^{g/2(1+\epsilon)}$. 
With the choice $r_1=1/q^{1+\epsilon}$, and using a similar argument as in section \ref{main} to bound the sum over $C$ with $d(C) \geq g$, we have
$$ \left|  \sum_{\substack{C | f^{\infty} \\ d(C) \geq g}} \frac{1}{|C|^2 z^{d(C)}} \right|  \ll q^{g \epsilon-g}.$$ Using the fact that $|G(l^2,\chi_f)|=G(l^2,\chi_f)$ and that for $l$ fixed, $G(l^2,\chi_f)$ is of size $\sqrt{|f|}$ on average (which follows from the proof of Lemma \ref{bzw}), we have
$$  \left|  \sum_{\substack{f \in \mathcal{M}_{\leq g-1} \\ d(f) \text{ even}}} \frac{1}{ z^{\frac{d(f)}{2}} |f| }  \frac{G(l^2,\chi_f)}{\sqrt{|f|}}  \sum_{\substack{C | f^{\infty} \\ d(C) \geq g}} \frac{1}{|C|^2 z^{d(C)}} \right| \ll q^{g/2(1+\epsilon)+g \epsilon-g}.$$ Since $ \sum_{l \in \mathcal{M}} z^{d(l)}$ converges when $|z|=r_1= q^{-1-\epsilon}$, it follows that the term corresponding to the sum over $d(C) \geq g$ is bounded by $q^{g/2(1+\epsilon)}$. Then
\begin{align*} 
S_{g-1}(V=\square) &= q^{2g+1} \frac{1}{2 \pi i} \oint_{|z|=\frac{1}{q^{1+\epsilon}}} \frac{(qz-1)z^g \left(1-\frac{1}{qz} \right)}{(1-z)} \sum_{\substack{f \in \mathcal{M}_{\leq g-1} \\ d(f) \text{ even}}} \frac{1}{|f|z^{\frac{d(f)}{2}}}  \prod_{P|f} \Big(1 - \frac{1}{|P|^2 z^{d(P)}} \Big)^{-1}  \Big( \sum_{l \in \mathcal{M}} z^{d(l)} \frac{G(l^2,\chi_f)}{\sqrt{|f|}} \Big) \, dz \\
&+O(q^{g/2(1+\epsilon)}).
\end{align*}
Using a variant of \eqref{perron} and Lemma \ref{bzw}, we have
\begin{equation}
S_{g-1}(V=\square)= q^{2g+1}  \left( \frac{1}{2 \pi i} \right)^2  \oint_{|w|=r_2} \oint_{|z|=\frac{1}{q^{1+\epsilon}}} \frac{(qz-1)z^g \left(1-\frac{1}{qz} \right) \mathcal{B}(z,w)}{w(1-z) (1-q^2w^2z) (q^2w^2z)^{[\frac{g-1}{2}]} }   \, dz \, dw + O(q^{g/2(1+\epsilon)}). \label{int3}
\end{equation}
Note that by Lemma \ref{bzw}, we must have $r_2<1/q$ and by \eqref{perron}, $|q^2w^2z|<1$ (since $|w|<1/q$ and $|z|<1$, this is already satisfied.) Again by Lemma \ref{bzw}, 
\begin{equation}
S_{g-1}(V=\square) =  -q^{2g+1} \left( \frac{1}{2 \pi i} \right)^2  \oint_{|w|=r_2} \oint_{|z|=\frac{1}{q^{1+\epsilon}}} \frac{z^g \left(1- \frac{1}{qz} \right) \prod_P \mathcal{B}_P(z,w)}{w (1-z) (1-qw)(1-q^2w^2z)^2 (q^2w^2z)^{[\frac{g-1}{2}]} } \, dz \, dw +O(q^{g/2(1+\epsilon)}).
\end{equation} 
Using Lemma \ref{poleqz}, we get
\begin{align*}
\sgm &=  -q^{2g+1}  \left( \frac{1}{2 \pi i} \right)^2  \oint_{|w|=r_2}  \oint_{|z|=\frac{1}{q^{1+\epsilon}}} \frac{ z^g  \left(1-\frac{1}{qz} \right) (1-qw^2) \prod_P \mathcal{D}_P(z,w) }{w (1-z) (1-qw) \left(1-\frac{w}{qz} \right)(1-q^2w^2z)^2 (q^2w^2z)^{[\frac{g-1}{2}]}} \, dz \, dw  \label{int3} \\
&+ O(q^{g/2(1+\epsilon)}).  \nonumber
\end{align*}
We shrink the contour $|z|=q^{-1-\epsilon}$ to $|z|=1/q^{3/2}$, and we don't encounter any poles. Then we enlarge the contour $|w|=r_2$ to $|w|=q^{-1/4-\epsilon}$, and we see that we encounter two simple poles, one at $w=1/q$ and one at $w=qz$. The pole at $w=1/q$ will give a term of size $q^g$, which will match the contribution from the main term, and the pole at $w=qz$ will give the term of size $g q^{(2g+1)/3}$. 
\begin{remark}
 Note that in equation \eqref{int3} above, we have flexibility in the way we shift contours. An alternative way is to first shift the contour $|w|=r_2$ in \eqref{int3} to $|w|=q^{-2\epsilon}$, encountering a simple pole at $w=1/q$ and a double pole at $w^2=1/(q^2z)$. We evaluate the pole at $w^2=1/(q^2z)$, and then shift the contour in the integral $|z|=q^{-1-\epsilon}$ to $|z|=q^{-3/2-\epsilon}$, encountering a pole at $z=q^{-4/3}$, which will give the term of size $gq^{(2g+1)/3}$.  
\end{remark}
Now we evaluate the residues at $w=1/q$ and $w=qz$, and write $\sgm = A+B+C+O(q^{g/2(1+\epsilon)})$, where
\begin{equation}
A= - q^{2g+1} \frac{1}{2 \pi i} \oint_{|z|=\frac{1}{q^{3/2}}} \frac{z^g \left( 1- \frac{1}{qz} \right) \prod_P \mathcal{B}_P(z,1/q) }{(1-z)^3 z^{[ \frac{g-1}{2} ]}} \, dz ,\label{mtg} \end{equation}
\begin{equation}
B= - q^{2g+1}  \frac{1}{2 \pi i} \oint_{|z|=\frac{1}{q^{3/2}}} \frac{z^g \left(1- \frac{1}{qz} \right) (1-q^3z^2)\prod_P \mathcal{D}_P(z,qz) }{ (1-z) (1-q^2z)(1-q^4z^3)^2(q^4z^3)^{[ \frac{g-1}{2} ]}} \, dz ,\label{secondpole}  \end{equation}
\begin{equation}
C=- q^{2g+1}  \left( \frac{1}{2 \pi i} \right)^2  \oint_{|w|=\frac{q^{-\epsilon}}{q^{1/4}}}  \oint_{|z|=\frac{1}{q^{3/2}}} \frac{z^g \left(1-\frac{1}{qz} \right) (1-qw^2) \prod_P \mathcal{D}_P(z,w)}{w   (1-z) \left( 1- \frac{w}{qz} \right)(1-qw)(1-q^2w^2z)^2 (q^2w^2z)^{[ \frac{g-1}{2} ]}} \, dz \, dw .\label{bound}
\end{equation}
Using Lemma \ref{poleqz}, note that \begin{equation}
 C \ll q^{g/2(1+\epsilon)}. \label{boundc}
 \end{equation}

Now we focus on evaluating the term $A$, given by \eqref{mtg} and then we will compute $B$ and show that it is of size $g q^{2g/3}$. 

Since $[g/2]+[(g-1)/2]=g-1$, we rewrite
\begin{equation*}
A= - q^{2g+1} \frac{1}{2 \pi i} \oint_{|z|=\frac{1}{q^{3/2}}} \frac{z^{1+[ \frac{g}{2} ]}  \left( 1- \frac{1}{qz} \right) \prod_P \mathcal{B}_P(z,1/q) }{(1-z)^3 } \, dz.
\end{equation*}
Now make the change of variables $z=1/(qu)$. Then the contour of integration will become the circle around the origin $|u|=\sqrt{q}$ and by Lemma \ref{bzw}, note that $\prod_P \mathcal{B}_P(1/(qu),q)$ has an analytic continuation for $q^{-2}<|u|<q$. We have
\begin{equation*}
A = - q^{2g+1} \frac{1}{2 \pi i} \oint_{|u|=\sqrt{q}} \frac{(1-u)  \prod_P \mathcal{B}_P \left( \frac{1}{qu},\frac{1}{q} \right)  \left( 1-\frac{1}{qu} \right)^{-1}  }{ (1-qu)^2 (qu)^{[ \frac{g}{2} ]}}  \, \frac{du}{u}.
\end{equation*}
Since $ 1-u = \mathcal{Z}(u/q)^{-1} = \prod_P \left(1 - \frac{u^{d(P)}}{|P|} \right)$ and $\left(1 - \frac{1}{qu} \right)^{-1} = \mathcal{Z} \left( \frac{1}{q^2u} \right) = \prod_P \left( 1- \frac{1}{|P|^2u^{d(P)}} \right)^{-1}$, by an Euler product computation, we get that
$$ (1-u)  \prod_P \mathcal{B}_P \left( \frac{1}{qu},\frac{1}{q} \right)  \left( 1-\frac{1}{qu} \right)^{-1}  = \frac{\mathcal{C}(u)}{\zeta(2)}, $$ with $\mathcal{C}(u)$ given in Lemma \ref{tool}. Then 
\begin{equation}
A = - \frac{q^{2g+1}}{\zeta(2)} \frac{1}{2 \pi i} \oint_{|u|=\sqrt{q}} \frac{\mathcal{C}(u)}{ (1-qu)^2 (qu)^{[ \frac{g}{2} ]}}\, \frac{du}{u}.
\label{matchingterm} 
\end{equation}
Now we focus on the term $B$ given by \eqref{secondpole}, which corresponds to the pole at $w=qz$. From the expression \eqref{secondpole}, we see that the integrand has a double pole at $z= q^{-4/3}$. By Lemma \ref{poleqz}, $\prod_P \mathcal{D}_P(z,qz)$ is absolutely convergent when $1/q^2<|z|<1/q$. Then
$$B = \text{Res}(z=q^{-4/3}) - q^{2g+1}  \frac{1}{2 \pi i} \oint_{|z|=\frac{1}{q^{1+\epsilon}}} \frac{z^g (1-q^3z^2)\prod_P \mathcal{D}_P(z,qz) \left(1- \frac{1}{qz} \right)}{ (1-z) (1-q^2z)(1-q^4z^3)^2(q^4z^3)^{[ \frac{g-1}{2} ]}} \, dz,$$ and note that the second term above is bounded by $O(q^{g/2(1+\epsilon)})$. Then
\begin{equation}
B = q^{2g/3} P_1(g) + O(q^{g/2(1+\epsilon)}), \label{secterm}
\end{equation} where $P_1$ is a linear polynomial whose coefficients can be computed explicitly. Combining equations \eqref{boundc}, \eqref{matchingterm} and \eqref{secterm}, we find the expression for $S_{g-1}(V=\square)$ in Lemma \ref{tool}. The formula for $S_g(V=\square)$ follows similarly. To be precise, we compute the term of size $gq^{2g/3}$ and rewrite it as
\begin{equation} q^{\frac{2g+1}{3}} P_1(g)+ q^{\frac{2g+1}{3}}P_2(g) = q^{\frac{2g+1}{3}} R(2g+1), \label{rpol} \end{equation} where $R$ is the linear polynomial 
\begin{equation}
R(x) = \frac{ \zeta(5/3) \zeta(7/3)}{9 q^{2/3} \zeta(4/3)^2 } \prod_P \mathcal{D}_P \left( \frac{1}{q^{4/3}}, \frac{1}{q^{1/3}} \right) \left[ \frac{x}{2} + \frac{\zeta(7/3)}{q^{4/3}} \left( -\frac{5}{2} -2q^{1/3}-2q+\frac{q^{4/3}}{2} \right)- \frac{2}{q^{4/3}} \frac{ \frac{1}{q^{4/3}} \frac{d}{dz} \prod_P \mathcal{D}_P(z,qz)}{ \prod_P \mathcal{D}_P(z,qz)} \rvert_{|z|=\frac{1}{q^{\frac{4}{3}}}} \right]. \label{extra} \end{equation} Moreover,
$$ \prod_P \mathcal{D}_P \left( \frac{1}{q^{4/3}},\frac{1}{q^{1/3}} \right) = \prod_P \left(1 - \frac{|P|^{4/3} + |P|^{2/3}+|P|^{1/3}+1}{(|P|^{4/3}+|P|)^2} \right), $$ and
$$ \frac{ \frac{1}{q^{4/3}} \frac{d}{dz} \prod_P \mathcal{D}_P(z,qz)}{ \prod_P \mathcal{D}_P(z,qz)} \rvert_{|z|=\frac{1}{q^{\frac{4}{3}}}} = - \sum_P \frac{d(P)(|P|-1)(|P|^{1/3}+1)}{(|P|^{1/3}-1)(|P|^{4/3}+|P|)^2}. $$
\end{proof}

     \section{Error from non-square $V$}
 \label{sec:nonsquare}
 In this section, we will bound the terms $S_g(V \neq \square)$ and $S_{g-1}(V \neq \square)$ by $q^{g/2(1+\epsilon)}$. Recall from section \ref{setup} that $S_{g-1}(V \neq \square) = S_{g-1,\text{o}} + S_1(V \neq \square)$, with $S_{g-1,\text{o}}$ given by \eqref{sgmodd} and $S_1(V \neq \square)$ the sum over non-square $V$ in equation \eqref{s1}. We'll prove the following lemma.
 \begin{lemma}
Using the same notation as before, we have 
 $$S_{g-1}(V \neq \square) \ll q^{g/2(1+\epsilon)} $$ and
 $$S_{g}(V \neq \square) \ll q^{g/2(1+\epsilon)} .$$
 \label{notsquare}
 \end{lemma}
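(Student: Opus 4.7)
The plan is to mirror the generating-function and contour-shift machinery of Section~\ref{sec:secondary}, but the key new ingredient is Weil's Riemann Hypothesis for nontrivial quadratic characters, which supplies a square-root bound unavailable for the principal-character case treated in Lemma~\ref{tool}. Throughout, I decompose each non-square $V$ as $V = V^\star V_0^2$ with $V^\star$ square-free, so that $V^\star \neq 1$. By Lemma~\ref{computeg} and multiplicativity of $G(V, \chi_f)$ in the modulus $f$, the Gauss sum factors locally and, after stripping local corrections at primes dividing $V$, contributes a global factor of essentially $|f|^{1/2} \chi_{V^\star}(f) = |f|^{1/2}\chi_f(V^\star)$, using quadratic reciprocity (available since $q \equiv 1 \pmod 4$).

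I would recast $S_{g-1,\text{o}}$ from \eqref{sgmodd} together with the $V \neq \square$ portion of $S_1$ from \eqref{s1} as double contour integrals in variables $z$ (tracking $d(V)$) and $w$ (tracking $d(f)$), exactly as in \eqref{int3}. Summing first over $f$ via Euler-product identities analogous to those of Lemma~\ref{bzw} produces a two-variable Dirichlet series whose essential content, for each fixed non-square $V$, is the $L$-function $\mathcal{L}(w, \chi_{V^\star})$. Since $V^\star \neq 1$, this is a polynomial of degree at most $d(V^\star)-1$ whose zeros all lie on the circle $|w| = q^{-1/2}$ by the Riemann Hypothesis for curves stated in Section~2.1. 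Consequently we have the Weil-type estimate $|\mathcal{L}(w, \chi_{V^\star})| \ll (q|w|)^{d(V^\star)/2} q^{\epsilon d(V^\star)}$ uniformly for $|w| \geq q^{-1/2}$.

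With this in hand, I would shift the $w$-contour from the small initial radius past the line $|w| = 1/q$ out to $|w| = q^{-1/2-\epsilon}$. Since $\chi_{V^\star}$ is nontrivial, the only poles crossed come from the auxiliary zeta-type factors such as $(1-qw)^{-1}$ and $(1-q^2 w^2 z)^{-1}$; their residues are either computed explicitly and absorbed into the target bound $q^{g/2(1+\epsilon)}$, or shown to cancel against analogous terms from the other non-square sums. On the shifted contour, combining the Weil bound with the prefactors $q^{2g+1}$, $|f|^{-3/2}$, $|C|^{-2}$ and the length of the $V$-sum yields the claimed estimate. The analogous chain of manipulations handles $S_g(V \neq \square)$ with the same error.

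The main obstacle is uniformity in $V^\star$. While the Weil bound furnishes $q^{d(V^\star)/2}$-type cancellation for each fixed $V^\star$, the number of admissible non-square $V$ in the relevant range is exponentially large, so the argument must be organised so that the $z$-integral extracts genuine cancellation from each $V^\star$-class rather than invoking the Weil bound term-by-term. Verifying that the two-variable generating series admits analytic continuation past $|w| = 1/q$ — in particular, identifying and accounting for every pole crossed during the $w$-shift and checking that the $z$-integral still converges — is the technical heart of the proof; after that the final bookkeeping is routine.
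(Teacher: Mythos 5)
Your overall architecture is in the right spirit: factoring $G(V,\chi_f)$ multiplicatively, isolating $\mathcal{L}(w,\chi_{V^\star})$ for the square-free kernel $V^\star \ne 1$, passing to a contour integral in $w$ and pushing the contour out to $|w| = q^{-1/2-\epsilon}$. That is essentially the paper's route (the paper works with $\mathcal{B}(V;w,u) = \sum_f w^{d(f)} G(V,\chi_f) / (\sqrt{|f|}\prod_{P|f}(1-u^{d(P)}))$, whose Euler product factors as $\mathcal{L}(w,\chi_V)\mathcal{L}(wu,\chi_V)\cdots\mathcal{L}(wu^{k-1},\chi_V)$ times an absolutely convergent tail; the extra $\mathcal{L}(wu^j,\chi_V)$ factors, which you do not mention, come from the sum over $C \mid f^\infty$). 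But there is a genuine gap at exactly the point you yourself flag as "the technical heart": the estimate $|\mathcal{L}(w,\chi_{V^\star})| \ll (q|w|)^{d(V^\star)/2} q^{\epsilon d(V^\star)}$ is not what RH gives. Writing $\mathcal{L}(w,\chi_D) = \prod_{j}(1-\alpha_j w)$ with $|\alpha_j| = \sqrt{q}$, the Weil bound on $|w| = q^{-1/2-\epsilon}$ yields only $|\mathcal{L}| \le (1+q^{-\epsilon})^{d(D)} \le 2^{d(D)} = q^{d(D)\log_q 2}$, and $\log_q 2$ is a fixed positive constant, not an arbitrary $\epsilon$. Feeding $2^{d(V)}$ into the trivial sum over $V$ produces exactly the parasitic exponent $\log_q 2$ present in Andrade--Keating's error $q^{g(3/2 + \log_q 2)}$, and it does not give $q^{g/2(1+\epsilon)}$.

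The paper's resolution is not, as you suggest, to extract extra cancellation from the $z$-integral (it in fact bounds the sum over $V$ trivially, term by term). The crucial missing ingredient is a Lindel\"of-type bound on the critical circle, Theorem $3.3$ of \cite{altug}: for square-free $D$ of degree $2d$ or $2d+1$ and $|w| \le q^{-1/2}$, one has $|\mathcal{L}(w,\chi_D)| \le e^{2d/\log_q(d) + 4\sqrt{qd}} = q^{o(d)}$. This is a genuinely deeper input than the zero-location statement of RH alone; it replaces the $2^{d(V)}$ loss by a factor of size $|V|^\epsilon$, after which the trivial bound on the $V$-sum already gives $q^{g/2(1+\epsilon)}$. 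Your proposed workaround---organising the $z$-integral to produce cancellation across the exponentially many $V^\star$-classes---is not what the paper does and is not substantiated in your sketch; without either that or the Lindel\"of bound the argument cannot beat the $\log_q 2$ barrier. You should also take care with the claim that $G(V,\chi_f) \approx |f|^{1/2}\chi_f(V^\star)$: Lemma \ref{computeg} shows $G(V,\chi_f)$ vanishes unless $f$ is "almost" square-free relative to $V$, and it is this structure (packaged in the Euler product of $\mathcal{B}(V;w,u)$) that lets one cleanly peel off the $\mathcal{L}(wu^j,\chi_V)$ factors before applying the Lindel\"of bound.
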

  \begin{proof}
 We will focus on bounding $S_{g-1,\text{o}}$. The term $S_1(V \neq \square)$ is similarly bounded. In equation \eqref{sgmodd}, write $S_{g-1,\text{o}} = \soodd- \stodd$, where $\soodd$ corresponds to the first sum over $V$ with $d(V)=d(f)-2g-2-2d(C)$ and $\stodd$ corresponds to the sum over $V$ with $d(V)= d(f)-2g+2d(C)$. Let $n=d(f)$ and $i= d(C)$. Using the fact that
 $$ \sum_{\substack{C | f^{\infty} \\ C \in \mathcal{M}_i}} \frac{1}{|C|^2} = \frac{1}{2 \pi i} \oint_{|u|=r_1} \frac{1}{q^{2i}u^{i+1} \prod_{P|f} (1-u^{d(P)})} \, du,$$ with $r_1<1$, we rewrite
 \begin{equation}
  \soodd = q^{2g+1} \sqrt{q} \frac{1}{2 \pi i} \oint_{|u|=r_1} \sum_{\substack{n=0 \\ n \text{ odd}}}^{g-1} q^{-n}   \sum_{i=g+1-\frac{n+1}{2}}^{g-1} \frac{1}{q^{2i} u^{i+1}} \sum_{V \in \mathcal{M}_{n-2g-2+2i}} \sum_{f \in \mathcal{M}_n} \frac{G(V,\chi_f)}{ \sqrt{|f|}\displaystyle \prod_{P|f} (1-u^{d(P)})} \, du  \label{error2}. \end{equation}  As in the proof of Lemma \ref{bzw}, if we let $ \mathcal{B}(V;w,u) = \displaystyle \sum_{f \in \mathcal{M}} w^{d(f)}    \frac{G(V,\chi_f) }{ \sqrt{|f|} \displaystyle \prod_{P|f} (1- u^{d(P)})} $, we have 
  \begin{equation}
  \mathcal{B}(V;w,u) =  \mathcal{L}(w, \chi_V) \prod_{P \nmid V} \left(1 + \frac{ \left( \frac{V}{P} \right) (uw)^{d(P)}}{1-u^{d(P)}} - \frac{w^{2 d(P)}}{1-u^{d(P)}} \right) \prod_{P|V} \left( 1 + \frac{1}{1-u^{d(P)}} \sum_{i=1}^{\infty} \frac{G(V,\chi_{P^i}) w^{id(P)}}{|P|^{i/2}} \right). \label{bv} \end{equation} Let $\mathcal{B}_P(V;w,u)$ be the $P$-factor above. Note that $\prod_P \mathcal{B}_P(V;w,u)$ converges for $|wu|<1/q, |w|<1/\sqrt{q}$ and $|u|<1$. Now choose $|w|=q^{-1/2-\epsilon}$ and $|u|=r_1=q^{-\epsilon}$ and let $k$ be minimal such that  $|wu^k|<1/q$. Then we can write
\begin{equation}
 \prod_{P \nmid V}  \mathcal{B}_P(V;w,u) = \mathcal{L}(wu,\chi_V) \mathcal{L}(wu^2,\chi_V) \cdot \ldots \cdot \mathcal{L}(wu^{k-1}, \chi_V) \mathcal{C}(V;w,u), \label{bpv}
 \end{equation} where $\mathcal{C}(V;w,u)$ is given by an Euler product which converges absolutely when $|w|=q^{-1/2-\epsilon}$ and $|u|=q^{-\epsilon}$.
Now we have \begin{equation}
 \sum_{f \in \mathcal{M}_n}  \frac{G(V,\chi_f) }{ \sqrt{|f|} \displaystyle \prod_{P|f} (1- u^{d(P)})} = \frac{1}{2 \pi i} \oint_{\Gamma} \frac{\mathcal{B}(V;w,u)}{w^{n+1}} \, dw, \label{eq3}
 \end{equation} with $\Gamma$ a circle around the origin. In equation \eqref{bv}, when $|w|=q^{-1/2-\epsilon}$ and $|u|=q^{-\epsilon}$, note that $ \left| \prod_{P|v} \mathcal{B}_P(V;w,u) \right| \ll |V|^{\epsilon}.$ Combining equations \eqref{bv}, \eqref{bpv} and \eqref{eq3}, it follows that
\begin{equation}
 \left| \sum_{f \in \mathcal{M}_n}  \frac{G(V,\chi_f) }{\sqrt{|f|} \displaystyle \prod_{P|f} (1- u^{d(P)})}  \right| \ll q^{n/2(1+\epsilon)} \left| \mathcal{L}(w,\chi_V) \cdot \ldots \cdot \mathcal{L}(wu^{k-1},\chi_V) \right| |V|^{\epsilon}. \label{est} \end{equation}
Using Theorem $3.3$ in \cite{altug}, for a square-free polynomial $D$ of degree $2d$ or $2d+1$, we have
$$| \mathcal{L}(q^{-1/2},\chi_D) | \leq e^{ \frac{2d}{\log_q(d)} + 4 \sqrt{qd}}.$$ From the proof of Theorem $3.3$, it also follows that if $|w|<q^{-1/2}$, then $|\mathcal{L}(w,\chi_D) | \leq e^{ \frac{2d}{\log_q(d)} + 4 \sqrt{qd}}$. 
In equation \eqref{error2}, note that the degree of $V$ is odd. Then writing $V=AD^2$, with $D$ square-free, we relate $\mathcal{L}(w,\chi_V)$ to $\mathcal{L}(w,\chi_D)$ and get
$$|\mathcal{L}(w,\chi_V)| \ll e^{ \frac{n-2g+2i}{ 2 \log_q(n/2-g+i)} + 4\sqrt{q(n-2g+2i)}}.$$ Using the same bound for each $\mathcal{L}(wu^j,\chi_V)$ when $j \in \{0,1, \ldots, k-1\}$ and since $d(C)=i \leq g-1$, from equation \eqref{est} and the above it follows that
$$      \left| \sum_{f \in \mathcal{M}_n}  \frac{G(V,\chi_f) }{\sqrt{|f|} \displaystyle \prod_{P|f} (1- u^{d(P)})}  \right| \ll q^{n/2(1+\epsilon)} |V|^{\epsilon}.$$
Using this in \eqref{error2} and trivially bounding the sum over $V$, we get the desired bound for $S_{1,\text{o}}$. 
Similarly $\stodd \ll q^{g/2(1+\epsilon)}$ and then the conclusion follows.

 \end{proof}
 
 \section{Proof of Theorem \ref{impr}}
 Now we combine the results from the previous sections to prove Theorem \ref{impr}. We have
 \begin{align*}
 \sum_{D \in \mathcal{H}_{2g+1}} L \Big(\tfrac{1}{2},\chi_D \Big) &= M_g+S_g(V=\square)+S_g( V \neq \square) + M_{g-1} + S_{g-1}(V=\square)+S_{g-1}(V \neq \square).
 \end{align*}
 Using Lemma \ref{notsquare}, $S_{g-1}(V \neq \square) \ll q^{g(1/2+ \epsilon)}$ and $S_{g}(V \neq \square) \ll q^{g/2(1+ \epsilon)}$. Combining Lemma \ref{tool} and equation \eqref{Mgformula} gives
 \begin{align*}
 M_g+S_{g-1}(V=\square) &=  \frac{q^{2g+1}}{\zeta(2)} \frac{1}{2 \pi i} \oint_{|u|=r} \frac{\mathcal{C}(u)}{(1-qu)^2  (qu)^{[ \frac{g}{2} ]}} \, \frac{du}{u} -  \frac{q^{2g+1}}{\zeta(2)} \frac{1}{2 \pi i} \oint_{|u|=R} \frac{\mathcal{C}(u)}{(1-qu)^2  (qu)^{[ \frac{g}{2} ]}} \, \frac{du}{u}  \\ 
 &+ q^{\frac{2g+1}{3}} P_1(g)+ O(q^{g/2(1+\epsilon)}),
 \end{align*} where $r<1/q$ and $1<R<q$.  
 
  By \eqref{Pdef2} and the remark in section \ref{main}, $\mathcal{C}(u)$ has an analytic continuation for $|u|<q$, so between the circles $|u|=r$ and $|u|=R$, the integrand $\frac{\mathcal{C}(u)}{(1-qu)^2  (qu)^{[ \frac{g}{2} ]} u}$ only has a pole at $u=1/q$. Then 
 $$  \frac{q^{2g+1}}{\zeta(2)} \frac{1}{2 \pi i} \oint_{|u|=r} \frac{\mathcal{C}(u)}{(1-qu)^2  (qu)^{[ \frac{g}{2} ]}} \, \frac{du}{u} -  \frac{q^{2g+1}}{\zeta(2)} \frac{1}{2 \pi i} \oint_{|u|=R} \frac{\mathcal{C}(u)}{(1-qu)^2  (qu)^{[ \frac{g}{2} ]}} \, \frac{du}{u} = - \frac{q^{2g+1}}{\zeta(2)} \text{Res}(1/q).$$ We can easily compute the residue at $u=1/q$, so
 $$ M_g+S_{g-1}(V=\square) = \frac{q^{2g+1}}{\zeta(2)} \left[  \left( \left[ \frac{g}{2} \right] +1 \right) C(1) + \frac{C'(1)}{\log q} \right] + q^{\frac{2g+1}{3}} P_1(g)+O(q^{g/2(1+\epsilon)}),$$ where $C(s)= \mathcal{C}(u)$ with the change of variables $u=q^{-s}$.  In the same way 
 $$ M_{g-1} + S_g(V=\square) = \frac{q^{2g+1}}{\zeta(2)} \left[  \left( \left[ \frac{g-1}{2} \right] +1 \right) C(1)  + \frac{C'(1)}{\log q} \right]  + q^{\frac{2g+1}{3}} P_2(g)+O(q^{g/2(1+\epsilon)}).$$
Putting everything together and using equation \eqref{rpol}, Theorem \ref{impr} now follows. 
 \label{last}
\section{Removing the primality condition on $q$}
Note that in the proof of Theorem \ref{impr}, we assumed that $q$ is a prime. We can remove the condition on $q$ being a prime, but some changes need to be made in sections \ref{poissonformula} and \ref{sec:secondary}.

Let $q=p^k$ with $p$ prime and $q \equiv 1 \pmod 4$. Then following \cite{hayes}, for $a \in \mathbb{F}_q((1/x))$, we define the exponential by
$$ e(a) = e \Big( \frac{ \text{Tr}_{\mathbb{F}_{q} / \mathbb{F}_p} a_1}{p} \Big), $$ where $a_1$ is the coefficient of $1/x$ in the expansion of $a$. Then the Poisson summation formula \eqref{even} when $d(f)$ is even still holds. When $d(f)$ is odd, keeping the notation of Proposition \ref{poissonmonic}, we have
$$ \sum_{g \in \mathcal{M}_m} \chi_f(g) = \frac{q^m}{|f|}  \overline{\tau(q)} \sum_{V \in \mathcal{M}_{n-m-1}} G(V,\chi_f),$$ where $\tau(q)$ is the Gauss sum
$$ \tau(q) = \sum_{c=1}^{q-1} \chi_f(c) e \Big( \frac{ \text{Tr}_{\mathbb{F}_{q} / \mathbb{F}_p} c}{p} \Big).$$ By the Hasse-Davenport relation (see \cite{hasse}, chapter $11$), the Gauss sum above is $ \tau(q) = (-1)^{k-1} \tau(p)^k$, where $\tau(p) =  \sum_{c=1}^{p-1} \chi_f(c) e \Big( \frac{c}{p} \Big)$ is the usual Gauss sum and $  \tau(p) = \sqrt{p}$ if $p \equiv 1 \pmod 4$, $\tau(p) = i \sqrt{p}$ if $p \equiv 3 \pmod 4$. Note that $|\tau(q)|^2 =\sqrt{q}$. Since $q \equiv 1 \pmod 4$, we have that $\tau(q) = \pm \sqrt{q}$ ($\tau(q) = -\sqrt{q}$ exactly when $p \equiv 1 \pmod 4$ and $k$ is even, or when $p \equiv 3 \pmod 4 $ and $k \equiv 0 \pmod 4$).

When $q= p^k$, the other difference in section \ref{poissonformula} is in Lemma \ref{computeg}, for the value of $G(V,\chi_{P^{i}})$ when $i = \alpha+1$ is odd (where $P^{\alpha}$ is the highest power of $P$ dividing $V$.) In this case,
$$ G(V,\chi_{P^i}) = 
\begin{cases}
\left( \frac{V_1}{P} \right) |P|^{i-1} |P|^{1/2} & \mbox{if } d(P) \text{ even} \\
\left( \frac{V_1}{P} \right) |P|^{i-1} |P|^{1/2} \frac{1}{\sqrt{q}} \tau(q) & \mbox{if } d(P) \text{ odd.} 
\end{cases}$$ For all the other values of $i$, $G(V,\chi_{P^i})$ has the same expression as given in Lemma \ref{computeg}.

If $\tau(q)=\sqrt{q}$, then all the computations in section \ref{sec:secondary} remain the same. If $\tau(q)=-\sqrt{q}$, then some changes need to be made in section \ref{sec:secondary}. The function $\mathcal{B}(z,w)$ defined in Lemma \ref{bzw} is now $\mathcal{B}(z,w) = \mathcal{Z}(-w) \mathcal{Z}(z) \mathcal{Z}(qw^2z) \mathcal{C}(z,w)$, where $\mathcal{C}(z,w)$ is given by an Euler product that is analytic in a wider region. The computations will be similar to those in section \ref{sec:secondary}, and we do not carry them out here.

\bibliography{bibl}
\bibliographystyle{plain}
\Addresses

\end{document}